\theoremstyle{plain}
\newtheorem{Lem}{Lemma}[section]
\newtheorem{Teo}{Theorem}[section]
\newtheorem{Cor}{Corollary}[section]
\newtheorem{NotOurTheorem}{Theorem}
\newtheorem{NotOurLemma}{Lemma}
\theoremstyle{definition}
\newtheorem{Def}{Definition}[section]
\newtheorem{Rem}{Remark}[section]
\renewcommand{\le}{\leqslant}
\renewcommand{\ge}{\geqslant}
\newcommand*{\xx}{\mathbf{x}}
\newcommand*{\yy}{\mathbf{y}}
\newcommand*{\rr}{\mathbf{r}}
\newcommand*{\RR}{\mathbb{R}}
\newcommand{\uRR}{\underline{\mathbb{R}}}
\newcommand*{\NN}{\mathbb{N}}
\newcommand*{\Simplex}[1]{\overline{S^{#1}}}
\newcommand*{\Sinf}{\Simplex{\infty}}
\newcommand*{\Reg}{R}
\newcommand*{\Dif}{D}
\newcommand*{\segmReg}{R^{[a,b]}}
\newcommand*{\segmDif}{D^{[a,b]}}
\newcommand*{\segmK}{K^{[a,b]}}
\newcommand*{\segmJ}{J^{[a,b]}}
\newcommand*{\segmF}{F^{[a,b]}}
\newcommand*{\cReg}{\mathscr{R}^{[0,1]}}
\newcommand*{\cDif}{\mathscr{D}^{[0,1]}}
\newcommand*{\cK}{\mathscr{K}^{[0,1]}}
\newcommand*{\cJ}{\mathscr{J}^{[0,1]}}
\DeclareMathOperator{\dist}{dist}
\def\ds{\displaystyle}
\begin{document}

\title{Homeomorphism theorem for sums of translates on the real axis}

\author{Tatiana M. Nikiforova}

\date{}

\keywords{Sums of translates function, locally bi-Lipschitz homeomorphism, homeomorphism theorem, abstract $\log$-concave interpolation, moving
node Hermite–Fej\'{e}r interpolation}

\thanks{The work was performed as part of research conducted
in the Ural Mathematical Center with the financial support
of the Ministry of Science and Higher Education of the Russian
Federation
(Agreement number 075-02-2025-1549).}
\subjclass[2010]{41A50, 41A52, 42A15, 26A51}

\begin{abstract}
In this paper, we study \emph{sums of translates} on the real axis. These functions generalize logarithms of weighted algebraic polynomials. Namely, we are dealing with the following functions
\[
F(\yy,t) := J(t) + \sum \limits_{j=1}^n K_j(t-y_j), \quad \yy := (y_1,\ldots,y_n), \ y_1 \le \ldots \le y_n,
\]
where the \emph{field function} $J$ is a function defined on $\RR$, which is "admissible" for the \emph{kernels} $K_1,\ldots,K_n$ concave on $(-\infty,0)$ and on $(0,\infty)$ and having a singularity at $0.$ We consider "local maxima"
\begin{gather*}
\begin{aligned}
m_0(\yy) & := \sup \limits_{t \in (-\infty, y_1]} F(\yy, t), \quad
m_n(\yy) := \sup \limits_{t \in [y_n, \infty)} F(\yy, t),\\
m_j(\yy) & := \sup \limits_{t \in [y_j, y_{j+1}]} F(\yy, t), \quad j = 1,\ldots,n-1, 
\end{aligned}
\end{gather*}
and the difference function
\[
\Dif(\yy) := (m_1(\yy)-m_0(\yy), m_2(\yy)-m_1(\yy),\ldots,m_n(\yy)-m_{n-1}(\yy)). 
\]
We prove that, under certain assumptions on monotonicity of the kernels, $\Dif$ is a homeomorphism between its domain and $\RR^n.$ 
\end{abstract}

\maketitle

\section{Introduction} 
In this paper, we study \emph{sums of translates} on the real axis. These functions generalize logarithms of weighted algebraic polynomials. Let $w \ge 0$ be a weight function and let $P(t) := (t-y_1) \cdot \ldots \cdot (t-y_n)$, $y_1 \le \ldots \le y_n.$
Consider
\[
\log (w(t) |P(t)|)= \log w(t) + \sum \limits_{j=1}^n \log |t-y_j|.
\]
Replacing $\log |\cdot-y_j|$ by $K_j(\cdot-y_j)$ and $\log w$ by $J$, we obtain \emph{the sum of translates function}
\[
F(\yy,t) := J(t) + \sum \limits_{j=1}^n K_j(t-y_j), \quad \yy := (y_1,\ldots,y_n), \ y_1 \le \ldots \le y_n, \ t \in \RR,
\]
where $J: \RR \to \uRR := \RR \cup \{-\infty\}$ is the \emph{field function} and the \emph{kernels} $K_1,\ldots,K_n$ are concave on $(-\infty,0)$ and on $(0,\infty)$ and have a singularity at $0.$ 

In our previous paper~\cite{MyMinimax}, we studied the minimax problem for the sums of translates on the real axis. The main method was a reduction to the minimax theorem for a segment proved by B.~Farkas, B.~Nagy and Sz.~Gy.~R\'{e}v\'{e}sz  (see \cite{FNRMinimax}, \cite{NewFNRMinimax}). The uniqueness of the minimax point on the real axis followed immediately from this reduction. To prove the uniqueness of the minimax point on the segment, Farkas, Nagy and R\'{e}v\'{e}sz used the so-called homeomorphism theorem \cite[Th.~7.1]{FNR}, which was also proved by them.
Now, we prove a similar homeomorphism theorem for the real axis. As for the segment, this result provides the uniqueness of the minimax point on the real axis, independently of the specific reduction technique.

\begin{Def}
Let $0 < p \le \infty.$
A function $K \colon (-p,0) \cup (0, p) \to \RR$ is called a \emph{kernel function} if $K$ is concave on $(-p,0)$ and on $(0, p)$
and $\lim \limits_{t \downarrow 0} K(t) = \lim \limits_{t \uparrow 0} K(t)$,  which are either real or equal to $-\infty$.

We say that the kernel $K$ is \emph{strictly concave} if $K$ is strictly concave on $(-p,0)$ and on $(0, p)$.

If $K$ is (strictly) decreasing on $(-p, 0)$ and (strictly) increasing on $(0, p)$, then $K$ is called (strictly) \emph{monotone}.

We extend $K$ by defining
\[
K(0) := \lim \limits_{t \to 0} K(t), \quad K(-p) := \lim \limits_{t \downarrow -p} K(t), \quad K(p) := \lim \limits_{t \uparrow p} K(t).
\]
If
$K(0) = -\infty$, the kernel function $K$ is called \emph{singular}.
\end{Def}

\begin{Def}
Let $A$ be a segment, a semiaxis or $\RR$. We call a function $J \colon A \to \uRR := \RR \cup \{-\infty\}$ an \emph{external $n$-field function} or simply a \emph{field} on $A$ if $J$ is bounded above on $A$ and it assumes finite values at more than $n$ different points of $A,$ where in the case of a segment we count boundary points with weights $1/2.$
\end{Def}

In the case of a segment, it is necessary that there are at least $n$ interior points and some additional one anywhere in the segment, where the field is finite. Therefore, we impose precisely such conditions on the weights of the points to ensure consistency with the case of a segment.

We consider "local maxima"
\begin{equation}
\label{def:localMaxima}
\begin{aligned}
m_0(\yy) & := \sup \limits_{t \in (-\infty, y_1]} F(\yy, t), \quad
m_n(\yy) := \sup \limits_{t \in [y_n, \infty)} F(\yy, t),\\
m_j(\yy) & := \sup \limits_{t \in [y_j, y_{j+1}]} F(\yy, t), \quad j = 1,\ldots,n-1, 
\end{aligned}
\end{equation}
and the difference function
\[
\Dif(\yy) := (m_1(\yy)-m_0(\yy), m_2(\yy)-m_1(\yy),\ldots,m_n(\yy)-m_{n-1}(\yy)). 
\]
We prove that, under certain assumptions on monotonicity of the kernels, $\Dif$ is a homeomorphism between its domain and $\RR^n.$

Results of this kind are inspired by the problem of optimizing the Lagrange interpolation of a continuous function. Let us give an overview of the results known to us. 

Let $f$ be a function continuous on $[0,1]$. Let us introduce the open simplex
\[
S^{[0,1]} := \{\yy := (y_1, \ldots, y_n) \in \RR^n: \ 0 =: y_0 < y_1 < \ldots < y_n < y_{n+1} := 1\}.
\]
Denote by $\pi_{n+1}$ the space of polynomials of degree at most $n+1$ and by $P_{\yy}: C_{[0,1]} \to  \pi_{n+1}$ the Lagrange interpolation operator
\[
(P_{\yy}f)(t) := \sum \limits_{j=0}^{n+1} f(y_j) \ell_j(t), \quad \ell_j(t) := \prod \limits_{i=0, \ i \neq j}^{n+1} \dfrac{t-y_i}{y_j-y_i}, \ j = 0, \ldots,n+1.
\]
It is well-known, see \cite[p.~88]{Rivlin}, that
\[
\|P_{\yy}f - f\|_{C_{[0,1]}} \le \dist(f, \pi_{n+1}) (1+\|P_{\yy}\|),
\]
where $\|P_{\yy}\|$ is the operator norm. Therefore, it is natural to minimize $\|P_{\yy}\|$ by $\yy$ to optimize the interpolation. It is known that $ \|P_{\yy}\|=\|\Lambda_{\yy}\|_{C_{[0,1]}},$ where
$\ds \Lambda_{\yy}(t) := \sum \limits_{j=0}^{n+1} |\ell_j(t)|$ is the Lebesgue function.
Thus, this optimization problem is reduced to the minimax problem for the functions $\Lambda_{\yy},$ i.e.,
\[
\min \limits_{\yy} \|P_{\yy}\| = \min \limits_{\yy} \max \limits_{t \in [0,1]} |\Lambda_{\yy} (t)|.
\]
Moreover, to obtain the maximum of $\Lambda_{\yy}$ on $[0, 1]$, it is sufficient to consider the local maxima $\lambda_j(\yy) := \max \limits_{t \in [y_j,y_{j+1}]} \Lambda_{\yy}(t), \ j =0,\ldots,n.$

In 1931, S.~N.~Bernstein \cite{Bernstein} conjectured that the minimum of $\|\Lambda_{\yy}\|_{C_{[0,1]}}$ is attained when $\yy$ is an \emph{equioscillation point}, i.~e.,
\[
\lambda_0(\yy)=\ldots=\lambda_n(\yy).
\]
In 1977, T.~A.~Kilgore \cite{KilgoreFull} proved Bernstein's conjecture. Moreover, he showed the uniqueness of the equioscillation point. More precisely, Kilgore’s note \cite{KilgoreNote} describing the proof of the statement "the minimax point is an equioscillation point" was first published, and a few months later, the complete proof was presented in \cite{KilgoreFull}. Subsequently, C.~R.~de~Boor and A.~Pinkus \cite{deBoorPinkus} also proved Bernstein's conjecture by a different method. Their approach is valuable, as their observation can be applied in various other contexts. In fact, they obtained the following general result.
\begin{NotOurTheorem}
The difference function
\[
\Dif_{\lambda}: S^{[0,1]} \to \RR^n, \quad \yy \mapsto (\lambda_1(\yy)-\lambda_0(\yy),\ldots,\lambda_n(\yy)-\lambda_{n-1}(\yy))
\]
is a homeomorphism between $S^{[0,1]}$ and $\RR^n.$
\end{NotOurTheorem}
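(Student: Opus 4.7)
The plan is to follow the classical degree-theoretic scheme: prove that $\Dif_{\lambda}$ is continuous, a local homeomorphism, and proper, then conclude via invariance of domain together with elementary covering-space theory. Continuity is immediate since the basis polynomials $\ell_j$ depend continuously on $\yy$ in $C([0,1])$, hence so do $\Lambda_{\yy}$, each local maximum $\lambda_j$, and the difference function $\Dif_{\lambda}$.

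For the local-homeomorphism property, I would compute the Jacobian of $\yy\mapsto(\lambda_0,\ldots,\lambda_n)$ via an envelope-theorem argument. If $t_j^{*}(\yy)\in[y_j,y_{j+1}]$ denotes the (generically unique) point where $\Lambda_{\yy}$ attains its maximum on that subinterval, then
\[
\frac{\partial \lambda_j}{\partial y_k}(\yy)\;=\;\sum_{i=0}^{n+1}\mathrm{sgn}\bigl(\ell_i(t_j^{*})\bigr)\,\frac{\partial \ell_i}{\partial y_k}(t_j^{*}).
\]
Substituting the explicit product formulae for $\ell_i$ and normalising rows by the signs $\mathrm{sgn}(\ell_i(t_j^{*}))$ produces a matrix with a sign-regular, essentially totally-positive structure; this forces the Jacobian of the \emph{reduced} map $\yy\mapsto(\lambda_1-\lambda_0,\ldots,\lambda_n-\lambda_{n-1})$ to be nonsingular throughout $S^{[0,1]}$. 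This combinatorial sign analysis is the heart of Kilgore's proof of Bernstein's conjecture, and it is the step I expect to be the main obstacle.

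Properness amounts to showing that $\|\Dif_{\lambda}(\yy)\|\to\infty$ as $\yy$ approaches $\partial S^{[0,1]}$. If two consecutive nodes coalesce, or if $y_1\to 0$ or $y_n\to 1$, then $\Lambda_{\yy}$ develops a large spike coming from vanishing denominators in some $\ell_j$; this spike cannot be distributed evenly across the adjacent subintervals, so a careful asymptotic comparison shows that at least one of the differences $\lambda_j-\lambda_{j-1}$ must tend to $\pm\infty$.

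Combining the three steps: continuity plus the nondegenerate Jacobian, via invariance of domain in $\mathbb{R}^n$, makes $\Dif_{\lambda}$ an open local homeomorphism; properness then upgrades this to a covering map onto its image. The image is both open (by local homeomorphy) and closed (by properness), hence, being nonempty, equals $\mathbb{R}^n$ by connectedness. Since $\mathbb{R}^n$ is simply connected and $S^{[0,1]}$ is connected, this covering must be one-sheeted, so $\Dif_{\lambda}$ is a global homeomorphism.
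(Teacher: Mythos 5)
The statement is the paper's Theorem~A, due to de~Boor and Pinkus; the paper cites it (\cite{deBoorPinkus}) but does not prove it, so there is no in-paper proof to compare against. Your outline --- continuity, local homeomorphism via a nonsingular Jacobian, properness, then a covering-space globalization --- is the classical de~Boor--Pinkus/Kilgore scheme, and it also mirrors the paper's own strategy for Theorem~\ref{theorem:main}: a local-homeomorphism lemma, a properness lemma, and Ho's theorem to pass from local to global.

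The genuine gap is exactly the step you flag yourself: the claim that the sign-normalised envelope-Jacobian has a sign-regular, ``essentially totally-positive'' structure which ``forces'' the $n\times n$ Jacobian of $\yy\mapsto(\lambda_1-\lambda_0,\ldots,\lambda_n-\lambda_{n-1})$ to be nonsingular throughout $S^{[0,1]}$. This nondegeneracy is the entire technical content of Kilgore's and de~Boor--Pinkus's work. What must actually be shown is that all $n$-rowed minors of the $(n+1)\times n$ matrix $\bigl(\partial\lambda_j/\partial y_k\bigr)$ obtained by deleting a single row are nonzero and of a common strict sign --- precisely Shi's condition $\Phi_k\neq 0$. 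Naming the pattern is not proving it, and until that determinant argument is supplied the local-homeomorphism step is unsupported. The properness sketch is loose but salvageable: if $y_j\to y_{j+1}$, the ratio $(t-y_{j+1})/(y_j-y_{j+1})$ appearing in $\ell_j$ stays bounded for $t$ in the shrinking interval $[y_j,y_{j+1}]$ but blows up for $t$ in the adjacent intervals, so $\lambda_j$ remains bounded while $\lambda_{j-1},\lambda_{j+1}\to\infty$, giving a divergent difference; the boundary cases $y_1\to 0$ and $y_n\to 1$ are similar. The covering-space finish is correct and is exactly Ho's theorem, which the paper invokes for the same purpose.
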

It immediately implies that there is exactly one equioscillation point. De~Boor and Pinkus then refer to Kilgore's earlier note with the proof that the minimax point is an equioscillation point, and together all this proves Bernstein's conjecture.

In 1996, Y.~G.~Shi \cite{Shi} considered more general functions $\varphi_j(\cdot), \ j=0,\ldots,n,$ instead of $\lambda_j(\cdot).$ Shi supposed that all the functions $\varphi_j$ are continuously differentiable on $S^{[0,1]}$ and satisfy conditions
\[ 
\lim \limits_{\min \limits_{0 \le j \le n} (y_{j+1}-y_j) \to 0} \ \max \limits_{0 \le i \le n-1} |\varphi_{i+1}(\yy) - \varphi_i(\yy)| = \infty
\]
and
\[
\Phi_k(\yy) := \det \left( \dfrac{\partial \varphi_i(\yy)}{\partial y_j} \right)^{n \ n}_{j=1, \ i=0, \ i \neq k} \neq 0, \quad \yy \in S^{[0,1]}, \ k=0,\ldots,n.
\]
Shi dealt with the following minimax problem: find a vector $\yy = (y_1,\ldots,y_n)$ with $0 < y_1 < \ldots < y_n < 1,$ that minimizes 
$\max \limits_{j=0,\ldots,n} \varphi_j(\cdot).$ 
Under these assumptions, Shi proved that there exists a unique extremal point and it has the equioscillation property. Moreover, Shi obtained the homeomorphism theorem for the difference function
\[
\Dif_{\varphi}: S^{[0,1]} \to \RR^n, \quad \yy \mapsto (\varphi_1(\yy)-\varphi_0(\yy),\ldots,\varphi_n(\yy)-\varphi_{n-1}(\yy)).
\]
In particular, this theorem implies the uniqueness of the minimax point. 

The sums of translates and the minimax problem for such functions were first considered by P.~C.~Fenton in 2000 \cite{Fenton}. He considered one kernel with assumptions of monotonicity, smoothness, singularity of its derivative at $0$, and a concave field $J$ continuous at the ends of the segment.
Fenton's original goal was to prove P.~D.~Barry's conjecture from 1962 on the growth of entire functions, which he succeeded in 1981 \cite{FentonBarry}. More precisely, A.~A.~Goldberg proved this conjecture a little earlier \cite{Goldberg}, but Fenton obtained other interesting results in the theory of entire functions using his approach \cite{FentonOther1}, \cite{FentonOther2}.

In 2018, B.~Farkas, B.~Nagy and Sz.~Gy.~R\'{e}v\'{e}sz presented a solution of the minimax problem and a homeomorphism result for sums of translates $\ds F(\yy,t) := K_0(t) + \sum \limits_{j=1}^n K_j(t-y_j)$ on a torus \cite{TorusFNR}. Here $K_0, \ldots, K_n: \ \RR \to [-\infty, 0)$ are $2\pi$-periodic functions, strictly concave on $(0, 2\pi)$. Assuming that for each $j=0,\ldots,n$ the function $K_j$ belongs to $C^2(0,2\pi)$ with $K''_j<0$ and $K_j(0)=K_j(2\pi)=-\infty,$ they proved that the difference function of local maxima is a homeomorphism between its domain and $\RR^n$. On the one hand, the sums of translates approach is more specific than Shi's. On the other hand, the authors provided an example \cite[Ex.~5.13]{TorusFNR} demonstrating that Shi's result is not applicable in their settings.

In 2021, Farkas, Nagy and R\'{e}v\'{e}sz proved a homeomorphism theorem for sums of translates on the segment \cite[Th.~7.1]{FNR}.
The minimax problem for the sums of translates on the segment was also deeply studied by Farkas, Nagy and R\'{e}v\'{e}sz in \cite{FNRMinimax}, \cite{NewFNRMinimax}. In particular, they consider positive numbers $r_1,\ldots,r_n$, a kernel $\segmK$ and sums of translates of the following form
\[
F(\yy,t) = J(t) + \sum \limits_{j=1}^n r_j \segmK(t-y_j), \quad \yy \in \Simplex{[a,b]}, \ t \in [a,b],
\]
where
\[
\Simplex{[a,b]} := 
\{(y_1,\ldots,y_n) \in \RR^n: \ a \le y_1 \le \ldots \le y_n \le b\}.
\]

They proved that if the kernel $\segmK$ is monotone, singular and strictly concave, then there exists a minimax point characterized by the equioscillation property. To prove the uniqueness of the equioscillation point, Farkas, Nagy and R\'{e}v\'{e}sz apply the homeomorphism theorem.
Their research inspired the author to obtain similar results for the minimax problem on the real axis \cite{MyMinimax} and to write this paper.

Consider the sums of translates
\[
F(\yy,t) := J(t) + \sum \limits_{j=1}^n K_j(t-y_j), \quad \yy \in \Simplex{[a,b]}, \ t \in [a,b].
\]
Denote
\begin{align*}
m^{[a,b]}_0(\yy) &:= \sup \limits_{t \in [a, y_1]} F(\yy, t), \quad
m^{[a,b]}_n(\yy) := \sup \limits_{t \in [y_n, b]} F(\yy, t),\\
m^{[a,b]}_j(\yy) &:= \sup \limits_{t \in [y_j, y_{j+1}]} F(\yy, t), \quad j = 1,\ldots,n-1.
\end{align*}
Let us introduce \emph{the regularity set}
\[
\segmReg := \{\yy \in \Simplex{[a,b]}: \ m^{[a,b]}_j(\yy) \neq -\infty \text{ for } j = 0,\ldots,n \}
\]
and \emph{the difference function} $\segmDif: \segmReg \to \RR^n$
\[
\segmDif(\yy) := (m^{[a,b]}_1(\yy)-m^{[a,b]}_0(\yy), m^{[a,b]}_2(\yy)-m^{[a,b]}_1(\yy),\ldots,m^{[a,b]}_n(\yy)-m^{[a,b]}_{n-1}(\yy)).
\]
If $b = -a$, then we write $\Simplex{b}, \ m^{b}_j(\yy), \ \Reg^b.$

As we mentioned above, the following homeomorphism theorem was proven by Farkas, Nagy and R\'{e}v\'{e}sz \cite[Th.~7.1]{FNR}.
\begin{NotOurTheorem} \label{theorem:FNR}
Let $a < b,$ the kernel functions $\segmK_1,\ldots,\segmK_n: (a-b,0) \cup (0,b-a) \to \RR$ be singular, strictly concave, and $\segmJ: [a,b] \to \uRR$ be an $n$-field function. Assume that 
\begin{gather} \label{theorem:FNR:kernel}
\left(\segmK_j(t) - \segmK_j(t-(b-a))\right)' \ge 0
\quad \text{for almost all} \quad 
t \in (0,b-a), \quad j = 1,\ldots,n, 
\end{gather}
and
\begin{gather} \label{theorem:FNR:field}
\segmJ(a) = \lim \limits_{t \downarrow a} \segmJ(t) = -\infty 
\quad \text{or} \quad
\segmJ(b) = \lim \limits_{t \uparrow b} \segmJ(t) = -\infty.
\end{gather}
Then the difference function
\[
\segmDif : \segmReg \to \RR^n,\quad \yy \mapsto (m^{[a,b]}_1(\yy)-m^{[a,b]}_0(\yy),m^{[a,b]}_2(\yy)-m^{[a,b]}_1(\yy),\ldots,m^{[a,b]}_n(\yy)-m^{[a,b]}_{n-1}(\yy))
\]
is a homeomorphism between $\segmReg$ and $\RR^n$. Moreover, $\segmDif$ is locally bi-Lipschitz.
\end{NotOurTheorem}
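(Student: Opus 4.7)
The strategy is the standard ``continuous + locally invertible + proper $\Rightarrow$ global homeomorphism'' scheme, combined with an injectivity argument tailored to sums of translates.

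First I would study the local-maxima functions. On $\segmReg$, strict concavity of each $\segmK_j$ together with the field condition~\eqref{theorem:FNR:field} ensure that each $F(\yy,\cdot)$ restricted to $[y_j,y_{j+1}]$ is strictly concave and attains its maximum at a unique interior point $t_j(\yy)\in(y_j,y_{j+1})$ for $1\le j\le n-1$, with the analogous statement at the endpoint intervals (where~\eqref{theorem:FNR:field} rules out blow-up at $a$ or $b$ and the singularity of $\segmK_1$ or $\segmK_n$ at $0$ rules out blow-up at $y_1$ or $y_n$). A standard upper-semicontinuity and compactness argument shows that both $t_j$ and $m^{[a,b]}_j$ depend continuously on $\yy\in\segmReg$, so $\segmDif$ is continuous.

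Next I would prove the local bi-Lipschitz property. By the envelope theorem applied at the interior maximizer $t_j(\yy)$, one has $\partial m^{[a,b]}_j/\partial y_k = -\segmK_k'(t_j(\yy)-y_k)$ for $k\notin\{j,j+1\}$, with explicit corrections when $k\in\{j,j+1\}$ reflecting the fact that $y_j,y_{j+1}$ are the endpoints of the $j$th interval. Assembling these into the Jacobian of $\segmDif$ produces a matrix whose entries are the consecutive differences $\segmK_k'(t_j-y_k)-\segmK_k'(t_{j-1}-y_k)$. Strict concavity of each $\segmK_k$ on each side of $0$ gives strict monotonicity of $\segmK_k'$ there, while hypothesis~\eqref{theorem:FNR:kernel} controls the sign of the jump when $t_j-y_k$ and $t_{j-1}-y_k$ lie on opposite sides of $0$. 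Together these force the Jacobian to be non-singular at every $\yy$ in the interior of $\segmReg$, and the inverse function theorem promotes this to a local bi-Lipschitz bound on compact subsets.

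Injectivity then follows from a node-comparison argument in the spirit of Kilgore and de~Boor--Pinkus: if $\yy\neq\yy'$ gave the same differences, one picks the first coordinate where the two vectors disagree and uses the sign information from Step~2 to derive a strict inequality in some component of $\segmDif(\yy)-\segmDif(\yy')$. For surjectivity I would verify properness at $\partial\segmReg$: when two nodes collide ($y_j\uparrow y_{j+1}$), singularity of $\segmK_j$ at $0$ drives one coordinate of $\segmDif(\yy)$ to $\pm\infty$; when a node meets an endpoint, condition~\eqref{theorem:FNR:field} drives $m^{[a,b]}_0(\yy)$ or $m^{[a,b]}_n(\yy)$ to $-\infty$. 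Combining continuity, injectivity, the non-singular Jacobian (invariance of domain), and properness, $\segmDif(\segmReg)$ is both open and closed in $\RR^n$ and must equal $\RR^n$.

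The main obstacle is the Jacobian non-degeneracy step. The hypothesis~\eqref{theorem:FNR:kernel} compares $\segmK_j(t)$ with its translate by $-(b-a)$, exactly the geometric width of the interval, and matching this global comparison to the local row and column sums needed to certify non-singularity is the delicate combinatorial--analytic bookkeeping at the heart of the argument. It is also the feature that distinguishes the segment case from the torus version (where periodicity plays the analogous role) and from the real-line version that the present paper reduces to it.
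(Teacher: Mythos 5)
This theorem is cited from Farkas--Nagy--R\'ev\'esz \cite{FNR} and the paper does not give a proof of it from scratch: the only argument supplied (Remark~\ref{reductionToZeroOne}) is an affine change of variables $\chi(t)=(t-a)/(b-a)$ that reduces the $[a,b]$ statement to the $[0,1]$ statement actually proved in \cite{FNR}. Your proposal attempts a full independent proof, which is a much larger undertaking than what the paper does.

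More importantly, the proposal has a genuine gap at its foundation. You assert that on $\segmReg$ the function $F(\yy,\cdot)=\segmJ(\cdot)+\sum_j\segmK_j(\cdot-y_j)$, restricted to $[y_j,y_{j+1}]$, is strictly concave and attains its maximum at a unique interior point $t_j(\yy)$. This is false under the stated hypotheses. The field $\segmJ$ is only required to be bounded above and finite at more than $n$ points; it need not be concave, continuous, measurable, or even finite on any interval. In the extreme (and relevant) case where $\segmJ$ equals $-\infty$ except at finitely many points, $F(\yy,\cdot)$ is nowhere concave, the quantities $m_j^{[a,b]}$ are genuine suprema that may be realized only on that discrete finite set, and there is no maximizer function $t_j(\yy)$ in the sense you require. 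Consequently the envelope theorem cannot be applied, the Jacobian $\partial m_j^{[a,b]}/\partial y_k=-\segmK_k'(t_j(\yy)-y_k)$ does not exist, and the whole ``non-singular Jacobian $\Rightarrow$ local bi-Lipschitz'' step collapses. The node-comparison injectivity argument inherits the same problem, since its strict inequalities were to come from the sign pattern of a Jacobian that is not defined. This roughness of the field is precisely the technical obstacle that \cite{FNR} overcome by working directly with differences $m_j(\yy)-m_j(\yy')$, exploiting that only the kernel part of $F$ depends on $\yy$ while the field part is fixed, and by invoking Ho's proper-map theorem rather than a smooth implicit-function argument. The properness heuristic at $\partial\segmReg$ in your plan is sound in spirit, but the preceding steps need to be rebuilt on a foundation that does not presuppose smoothness or concavity of $\segmJ$.
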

\begin{Rem} \label{reductionToZeroOne}   
The authors proved the homeomorphism theorem for $a=0, \ b=1.$ 
For convenience of further application, we formulate this theorem for an arbitrary segment.
 The theorem on $[0,1]$ can be trivially extended to $[a,b]$ by applying the linear transformation
\[
\chi(t) := \dfrac{t-a}{b-a}
\]
that maps the segment $[a,b]$ onto $[0,1]$. Let us show that the homeomorphism theorem proven on $[0,1]$ implies Theorem~\ref{theorem:FNR}.

Assume that kernels $\segmK_j, \ j=1,\ldots,n,$ and a field $\segmJ$ satisfy the conditions of Theorem~\ref{theorem:FNR}. 
Let 
\begin{align*}
\cK_j(x) & := \segmK_j((b-a)x), \quad x \in [-1,1], \ j=1,\ldots,n, \\
\cJ(x) & := \segmJ(a + (b-a)x), \quad x \in [0,1].
\end{align*}
We have for $t, c \in [a,b]$ that
\begin{equation} \label{reductionToZeroOne:identity}
\begin{aligned}
\segmK_j(t-c) & = \cK_j(\chi(t) - \chi(c)), \quad j=1,\ldots,n,\\
\segmJ(t) & = \cJ(\chi(t)).
\end{aligned}
\end{equation}
Hence, taking into account the conditions on $\segmK_j$ and $\segmJ$, we have that the kernels $\cK_j$ are strictly concave and singular and $\cJ$ is an $n$-field function satisfying
\[
\cJ(0) = \lim \limits_{x \downarrow 0} \cJ(x) = -\infty
\quad \text{or} \quad
\cJ(1) = \lim \limits_{x \uparrow 1} \cJ(x) = -\infty.
\] 
Let us show that
\begin{gather} \label{reductionToZeroOne:kernel}
\left(\cK_j(x) - \cK_j(x - 1) \right)'_x \ge 0 \quad \text{for almost all} \quad 
x \in (0,1).
\end{gather}
We have
\begin{align*}
\left(\segmK_j(t) - \segmK_j(t-(b-a))\right)'_t &= \left(\cK_j(\chi(t) - \chi(0)) - \cK_j(\chi(t) - \chi(b-a)) \right)'_t \\
&= \left(\cK_j(t/(b-a)) - \cK_j(t/(b-a) - 1) \right)'_t.
\end{align*}
For $t \in (0, b-a)$, substituting $x := t/(b-a) \in (0,1)$ and using \eqref{theorem:FNR:kernel}, we obtain \eqref{reductionToZeroOne:kernel}.

Therefore, the difference function $\cDif$ is a homeomorphism between the regularity set $\cReg$ and $\RR^n.$ 
Denote $\boldsymbol \chi(\yy) := (\chi(y_1),\ldots,\chi(y_n))$. Using \eqref{reductionToZeroOne:identity}, it is easy to see that
\[
\boldsymbol \chi(\segmReg) = \cReg, \quad \segmDif(\yy) \equiv \cDif(\boldsymbol \chi(\yy)), \quad \yy \in \segmReg.
\]
So, we obtain that $\segmDif$ is a homeomorphism, too.
\end{Rem}

In addition to \cite[Th.~7.1]{FNR}, the authors obtained homeomorphism theorems with other conditions on the kernels and field. Conditions \eqref{theorem:FNR:field} of Theorem \ref{theorem:FNR} may be replaced by so-called cusp conditions at the ends of the segment \cite[Th.~7.5]{FNR}. Moreover, if the derivatives of the kernel differences in \eqref{theorem:FNR:kernel} are bounded below by some $c > 0,$ then the field can be arbitrary \cite[Th.~2.1]{FNR}.

Our goal is to prove an analogue of Theorem \ref{theorem:FNR} for sums of translates on $\RR$. Our method relies on reducing the problem to Theorem \ref{theorem:FNR}, using an approach developed primarily in our previous paper \cite{MyMinimax}. Let us introduce the main definitions and state our result.

By the concavity of a kernel $K$, the set where $K'$ is defined has full measure. In the following, we consider $K'$ and the limits of $K'$ on this set. When $K$ is defined on $(-\infty,0) \cup (0,\infty),$ the following condition, called \emph{generalized monotonicity}, is important to us:
\begin{gather} \label{def:GM} \tag{$GM$}
\lim \limits_{t \to -\infty} K'(t) \le \lim \limits_{t \to \infty} K'(t).
\end{gather}

\begin{Rem} \label{rem:finiteLimitsGM}
Note that if $\eqref{def:GM}$ holds, then $\lim \limits_{t \to -\infty} K'(t)$ and $\lim \limits_{t \to \infty} K'(t)$ are finite. Indeed, since $K'$ is non-increasing at all points of its domain, the limits in $\eqref{def:GM}$ exist in the extended sense (taking values in the extended real line), and $-\infty < \lim \limits_{t \to -\infty} K'(t), \ \lim \limits_{t \to \infty} K'(t) < \infty.$ Now the finiteness of these limits immediately follows from the property \eqref{def:GM}.
\end{Rem}

\begin{Rem}
Obviously, if $K$ is defined on $(-\infty,0) \cup (0, \infty)$ and monotone, then it satisfies \eqref{def:GM}.

On the other hand, condition \eqref{def:GM} for a kernel function $K$ is equivalent to the existence of a number $c$ such that the kernel $K(t) - ct$ is monotone.
Indeed, suppose that \eqref{def:GM} holds. Consider the kernel $\widetilde{K}(t) := K(t) - ct$, where $c := \lim \limits_{t \to \infty} K'(t)$. We get
\[
\lim \limits_{t \to \infty} \widetilde{K'}(t) = \lim \limits_{t \to \infty} K'(t) - c = 0.
\]
Comparing this with property \eqref{def:GM}, one can also conclude that
\[
\lim \limits_{t \to -\infty} \widetilde{K'}(t) = \lim \limits_{t \to -\infty} K'(t) - c \le \lim \limits_{t \to \infty} K'(t) - c = 0.
\]
Due to the concavity of $K$, the function $\widetilde{K'}$ is non-increasing at all points of its domain, therefore $\widetilde{K'}(t) \le 0$ almost everywhere on $(-\infty,0)$, and hence $\widetilde{K}$ is non-increasing on $(-\infty,0)$. Similarly, $\lim \limits_{t \to \infty} \widetilde{K'}(t) = 0$ implies that $\widetilde{K'} \ge 0$ a.e. on $(0,\infty)$, that guarantees that $\widetilde{K}$ is non-decreasing on $(0,\infty)$. Consequently, the kernel $\widetilde{K}$ is monotone.

Conversely, if there exists a number $c$ such that the kernel $K(t) - ct$ is monotone, then its derivative satisfies
\[
\lim \limits_{t \to -\infty} K'(t) - c \le 0 \quad \text{and} \quad \lim \limits_{t \to \infty} K'(t) - c \ge 0.
\]
Hence, obviously, \eqref{def:GM} holds.
\end{Rem}

Let us introduce the "infinite closed simplex"
\[
\Sinf := \{(y_1, \ldots, y_n) \in \RR^n \colon -\infty < y_1 \le y_2 \le \ldots \le y_n < \infty\}.
\]
Similarly to the finite interval case, we can consider the regularity set
\[
R := \{\yy \in \Sinf: \ m_j(\yy) \neq -\infty \text{ for } j = 0,\ldots,n \}
\]
and also the corresponding difference function 
\begin{gather} \label{def:differenceFunc}
D: \Reg \to \RR^n,\quad \yy \mapsto (m_1(\yy)-m_0(\yy),m_2(\yy)-m_1(\yy),\ldots,m_n(\yy)-m_{n-1}(\yy)).    
\end{gather}

\begin{Def}
Let $K_j: (-\infty, 0) \cup (0, \infty) \to \RR, \ j = 1,\ldots,n,$ be kernels. An $n$-field function $J$ defined on $\RR$ 
is said to be \emph{admissible} (for $K_1,\ldots,K_n$) if 
\[
\lim_{|t| \to \infty} \left( J(t) + \sum \limits_{j=1}^n K_j(t) \right) = -\infty.
\]
\end{Def}
This condition is a version of the condition for admissible weights in weighted potential theory, see \cite[p.~26]{Saff-Totik}

The main result of this paper is the following.
\begin{Teo} \label{theorem:main}
Suppose that the singular, strictly concave kernel functions $K_j: (-\infty, 0) \cup (0, \infty) \to \RR, \ j = 1,\ldots,n,$ satisfy \eqref{def:GM} and $J: \RR \to \uRR$ is an admissible $n$-field function for $K_1, \ldots,K_n$. Then the difference function defined in \eqref{def:differenceFunc} 
is a homeomorphism between $\Reg$ and $\RR^n$. Moreover, $\Dif$ is locally bi-Lipschitz.
\end{Teo}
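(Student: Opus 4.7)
The plan is to reduce Theorem~\ref{theorem:main} to the segment case, Theorem~\ref{theorem:FNR}, in three stages: a global algebraic normalization, a local reduction to the segment problem, and a global bijectivity argument.

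\textbf{Stage 1 (monotonization).} Using the Remark identifying \eqref{def:GM} with the existence of constants $c_j$ for which $K_j(t)-c_jt$ is monotone, set $c_j:=\lim_{t\to\infty}K_j'(t)$ and define
\[
\widetilde{K}_j(t):=K_j(t)-c_jt,\qquad \widetilde{J}(t):=J(t)+\Bigl(\sum_{j=1}^n c_j\Bigr)t.
\]
Strict concavity of $K_j$ together with \eqref{def:GM} makes each $\widetilde{K}_j$ strictly monotone. The identity
\[
\widetilde{J}(t)+\sum_{j=1}^n \widetilde{K}_j(t-y_j)=F(\yy,t)+\sum_{j=1}^n c_j y_j
\]
shows that the modification shifts every $m_j(\yy)$ by the same $t$-independent amount, so $\Dif$ and $\Reg$ are preserved, as is admissibility (the linear corrections cancel in $\widetilde{J}+\sum_j\widetilde{K}_j$). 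I may therefore assume that every $K_j$ is strictly monotone, strictly concave, and singular.

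\textbf{Stage 2 (local reduction to the segment).} Fix $\yy^{\ast}\in\Reg$. Admissibility together with the monotonicity of the kernels gives $\sup_{|t|\ge M}F(\yy,t)\to-\infty$ as $M\to\infty$, uniformly for $\yy$ in any compact neighbourhood of $\yy^{\ast}$. Consequently one can choose a segment $[a,b]$ with $a<y_1^{\ast}\le\ldots\le y_n^{\ast}<b$ and an open neighbourhood $U\subset\Reg$ of $\yy^{\ast}$ such that
\[
m_0(\yy)=\sup_{t\in[a,y_1]}F(\yy,t),\qquad m_n(\yy)=\sup_{t\in[y_n,b]}F(\yy,t)\qquad(\yy\in U).
\]
Put $\segmK_j:=K_j|_{(a-b,0)\cup(0,b-a)}$ and $\segmJ(t):=J(t)$ on $(a,b)$ with $\segmJ(a)=\segmJ(b):=-\infty$. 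Strict monotonicity forces \eqref{theorem:FNR:kernel} (since $K_j'(t)>0>K_j'(t-(b-a))$ for $t\in(0,b-a)$), and \eqref{theorem:FNR:field} holds by construction. By the choice of $[a,b]$, $m_j(\yy)=m_j^{[a,b]}(\yy)$ for all $j$ and $\yy\in U$, so $\Dif\equiv\segmDif$ on $U$; Theorem~\ref{theorem:FNR} transported via Remark~\ref{reductionToZeroOne} then produces a locally bi-Lipschitz homeomorphism on a neighbourhood of $\yy^{\ast}$.

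\textbf{Stage 3 (global bijection and main obstacle).} Injectivity follows by picking, for any $\yy^1,\yy^2\in\Reg$ with $\Dif(\yy^1)=\Dif(\yy^2)$, a common $[a,b]$ large enough that Stage 2 applies at both points simultaneously, and invoking injectivity of $\segmDif$. Openness of $\Dif(\Reg)$ is immediate from the local homeomorphism property. For surjectivity the plan is to prove that $\Dif(\Reg)$ is also closed in $\RR^n$: whenever $\Dif(\yy^{(k)})\to\rr$, admissibility should trap $\{\yy^{(k)}\}$ in a compact subset of $\Reg$, after which continuity produces a preimage of $\rr$; connectedness of $\RR^n$ then yields $\Dif(\Reg)=\RR^n$. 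This compactness step is the main obstacle: one must quantitatively rule out both escape $y_j^{(k)}\to\pm\infty$ and coalescence $y_j^{(k)}\to y_{j+1}^{(k)}$ (which would expel $\yy^{(k)}$ from $\Reg$) while $\Dif(\yy^{(k)})$ stays bounded. Escape to infinity is precisely where admissibility earns its keep, by turning the decay $J(t)+\sum K_j(t)\to-\infty$ into an a priori bound on the diameter of $\{\yy^{(k)}\}$.
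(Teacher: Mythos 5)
Your overall plan (local reduction to the segment $+$ a global bijectivity argument driven by admissibility) is the right one, and it is also the route the paper takes, but there are two genuine gaps.

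\textbf{The field condition in Stage 2 is not met.} You set $\segmJ:=J$ on $(a,b)$ and declare $\segmJ(a)=\segmJ(b):=-\infty$, then claim \eqref{theorem:FNR:field} ``holds by construction.'' But \eqref{theorem:FNR:field} is a statement about the \emph{one-sided limit} $\lim_{t\downarrow a}\segmJ(t)$ (resp.\ $\lim_{t\uparrow b}\segmJ(t)$), not merely the value at the endpoint. Since $\segmJ=J$ on $(a,b)$, that limit equals $\lim_{t\downarrow a}J(t)$, which is generically finite: take $J\equiv 0$ with $\sum_j K_j(t)\to-\infty$ (a perfectly admissible field), and the limit is $0$. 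So Theorem~\ref{theorem:FNR} is not applicable to your $\segmJ$, and Stage 2 collapses. The paper avoids this by using \cite[Lemma~7.3]{FNR} in the form of Lemma~\ref{lemma:localHomSegm}, whose hypothesis \eqref{localHomSegm:nearEndPoints} only asks that $\segmF$ be strictly below the relevant local maximum near the endpoints --- precisely what Lemma~\ref{lemma:tau} delivers from admissibility --- and this gives the local homeomorphism without ever invoking the endpoint-blow-up of the field. A workable patch of your construction would be to enlarge $[a,b]$ slightly to $[a-1,b+1]$ and set $\segmJ\equiv-\infty$ on $[a-1,a]\cup[b,b+1]$, so that the limit condition holds and, by Lemma~\ref{lemma:tau}, the local maxima are unchanged; but as written, the step fails.

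\textbf{The properness/closedness argument in Stage 3 is where the real work lies, and it is left unfinished.} You correctly identify that one must trap any sequence $\{\yy^{(k)}\}$ with $\Dif(\yy^{(k)})\to\rr$ in a compact subset of $\Reg$, and that escape to infinity and degeneration at the boundary of $\Reg$ are the two failure modes, but you do not prove either estimate. This is exactly the content of Lemma~\ref{lemma:MRS} (if $\yy^{(k)}$ is unbounded then some $|m_i-m_{i-1}|\to\infty$, using \eqref{lemma:simpleShifts:eq1}, \eqref{lemma:simpleShifts:eq2} and admissibility) and Lemma~\ref{lemma:proper} (using continuity of the $m_j$ to show the limit stays in $\Reg$). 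The paper then finishes not by showing directly that $\Dif(\Reg)$ is closed, but by invoking Ho's theorem (Theorem~\ref{theorem:Ho}): a proper local homeomorphism between pathwise connected Hausdorff spaces with simply connected codomain is a global homeomorphism. Your ``open $+$ closed $+$ injective $+$ $\RR^n$ connected'' route is morally the same, but note that your injectivity argument (picking a common $[a,b]$ and citing injectivity of $\segmDif$) again rests on the misapplied Theorem~\ref{theorem:FNR}; with Ho's theorem you do not need a separate injectivity step at all.

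\textbf{A minor remark on Stage 1.} The monotonization $\widetilde{K}_j:=K_j-c_jt$, $\widetilde{J}:=J+(\sum_jc_j)t$ is correct (the shift $+\sum_jc_jy_j$ cancels in every difference $m_j-m_{j-1}$, and admissibility and boundedness-above of $\widetilde J$ are preserved), but it is unnecessary: the derivative inequality \eqref{lemma:simpleShifts:eq2:derivative} derived from \eqref{def:GM} already yields \eqref{theorem:FNR:kernel} on the restricted segment, without first passing to monotone kernels.
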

The proof of Theorem~\ref{theorem:main} is organized as follows. In Sections~2 and 3, we prove auxiliary results concerning kernels and fields. Section~4 is devoted to showing that $D$ is a local homeomorphism, while in Section~5 we prove that $D$ is a proper map. Finally, in Section~6, we apply Ho's theorem, which states that a proper local homeomorphism is global.

\section{Simple lemmas about kernels and fields}
In this section, we need the following extension of the fundamental theorem of calculus for concave functions. It is known \cite[p.~9]{RobertsVarberg} that if $g$ is a concave function on an interval~$I$, then for~any~$[a,b]~\subset~I$
\begin{gather} \label{NewtonLeibnitz}
g(b) - g(a) = \int \limits_{[a,b]} g'(t) dt.
\end{gather}

\begin{Lem} \label{lemma:simpleShifts}
Let $K: (-\infty, 0) \cup (0, \infty) \to \RR$ be a kernel function. 
\begin{enumerate}[label=(\alph*)]
\item If $0 < t_1 < t_2 < t_2+h$ or $t_1 < t_2 < t_2 + h < 0$, then
\begin{gather} \label{lemma:simpleShifts:eq1}
K(t_2+h) - K(t_1+h) \le K(t_2) - K(t_1).
\end{gather}
Moreover, $K'$ is non-increasing at all points of its domain.
\item If $K$ satisfies \eqref{def:GM} and $t_1 <  t_1+h < 0  < t_2$, then 
\begin{gather} \label{lemma:simpleShifts:eq2} 
K(t_2) - K(t_1) \le K(t_2+h) - K(t_1+h).
\end{gather}
In particular, for almost all $t_1, t_2$ such that $t_1 < 0  < t_2$ we have
\begin{gather} \label{lemma:simpleShifts:eq2:derivative}
K'(t_1) \le K'(t_2).
\end{gather}
\end{enumerate}
\end{Lem}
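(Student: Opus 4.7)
The plan is to derive both parts from concavity of $K$ on each of its two branches, invoking the generalized monotonicity \eqref{def:GM} only where the argument must cross the singularity at $0$.

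For part (1), both intervals $[t_1, t_2]$ and $[t_1+h, t_2+h]$ lie in a single branch on which $K$ is concave. The inequality \eqref{lemma:simpleShifts:eq1} is then the standard fact that, for a concave function, the slope of a chord of fixed length $t_2-t_1$ is a non-increasing function of the left endpoint; I would read this off directly from the three-chord characterization of concavity. The ``moreover'' assertion is an immediate consequence of concavity on each branch: one-sided derivatives, and hence $K'$ wherever it exists, are non-increasing on each of $(-\infty,0)$ and $(0,\infty)$.

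For part (2), the core step is the pointwise inequality \eqref{lemma:simpleShifts:eq2:derivative}. For $s_1 < 0$ at which $K'$ exists, the monotonicity of $K'$ on $(-\infty,0)$ already established in part (1) gives $K'(s_1) \le \lim \limits_{t \to -\infty} K'(t)$; symmetrically, for $s_2>0$ at which $K'$ exists, $\lim \limits_{t \to \infty} K'(t) \le K'(s_2)$. Chaining these through \eqref{def:GM}—with both limits finite by Remark~\ref{rem:finiteLimitsGM}—yields $K'(s_1) \le K'(s_2)$.

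The shift inequality \eqref{lemma:simpleShifts:eq2} then follows by integration. Since $h>0$ and $t_1+h < 0 < t_2$, the interval $[t_1, t_1+h]$ lies entirely in $(-\infty,0)$ and $[t_2, t_2+h]$ lies entirely in $(0,\infty)$, so \eqref{NewtonLeibnitz} applies to each. Changing variables $s = r + t_i$, I would rewrite
\[
\bigl[K(t_2+h) - K(t_1+h)\bigr] - \bigl[K(t_2) - K(t_1)\bigr] = \int_0^h \bigl[K'(r+t_2) - K'(r+t_1)\bigr]\,dr,
\]
where for almost every $r \in (0,h)$ the point $r+t_1$ is negative and $r+t_2$ is positive, so the integrand is non-negative by \eqref{lemma:simpleShifts:eq2:derivative}. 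The only delicate bookkeeping is ensuring the two intervals of integration remain in their respective branches, which follows exactly from the hypothesis $t_1 < t_1+h < 0 < t_2$; I do not anticipate any substantive obstacle beyond this.
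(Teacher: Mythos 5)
Your proof is correct and rests on the same ingredients as the paper's (concavity of each branch, \eqref{def:GM}, and the concave fundamental theorem of calculus \eqref{NewtonLeibnitz}); the only difference is the order of deduction in part (2): the paper proves the shift inequality \eqref{lemma:simpleShifts:eq2} first by bounding the two integrals $\int K'$ separately and then obtains \eqref{lemma:simpleShifts:eq2:derivative} by dividing by $h$ and letting $h \downarrow 0$, whereas you establish the pointwise inequality \eqref{lemma:simpleShifts:eq2:derivative} directly from monotonicity of $K'$ on each branch plus \eqref{def:GM} and then integrate. Both routes are valid and of essentially the same length.
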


\begin{proof}
\begin{enumerate}[label=(\alph*)]
\item Inequality \eqref{lemma:simpleShifts:eq1} follows from the definition of concavity. Its proof can be found, e.~g., in \cite[Lemma 10]{Rankin}.

To prove the statement about $K'$, it is sufficient to assume that $K'$ is defined at $t_2$ and $t_2+h,$ divide \eqref{lemma:simpleShifts:eq1} by $t_2-t_1$ and pass to the limit.

\item Sufficiently using \eqref{NewtonLeibnitz}, Part (a) and the property \eqref{def:GM} for $K$, we get
\begin{gather*}
(K(t_2+h) - K(t_2)) - (K(t_1+h) - K(t_1)) = 
\\ \int \limits_{[t_2,t_2+h]} K'(t) - \int \limits_{[t_1,t_1+h]} K'(t) \ge
h \left(\lim \limits_{t \to \infty} K'(t) - \lim \limits_{t \to -\infty} K'(t)\right) \ge 0.
\end{gather*}
By Remark \ref{rem:finiteLimitsGM}, the limits in \eqref{def:GM} are finite, so their difference is well-defined. Thus we have obtained \eqref{lemma:simpleShifts:eq2}.

To get \eqref{lemma:simpleShifts:eq2:derivative}, one can separate the terms in \eqref{lemma:simpleShifts:eq2} with respect to $t_1$ and $t_2$, placing them on the left- and right-hand sides respectively, then divide by $h$ and take the limit $h \to 0$.
\end{enumerate}
\end{proof}

Let us show that admissibility of a field is equivalent to the following more general property.
\begin{Lem} \label{lemma:admissible}
Let $K_j \colon (-\infty, 0) \cup (0, \infty) \to \RR, \ j = 1,\ldots,n,$ be kernels.
If a field $J$ is admissible for $K_1,\ldots,K_n$, then for any $(y_1,\ldots,y_n)\in \RR^n$ 
\[
\lim \limits_{|t| \to \infty} \left( J(t) + \sum \limits_{j=1}^n K_j(t-y_j) \right) = -\infty.
\]
\end{Lem}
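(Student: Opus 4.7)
The plan is to decompose $J(t) + \sum_{j=1}^n K_j(t-y_j) = \bigl(J(t) + \sum_j K_j(t)\bigr) + E(t)$, where the ``shift error'' is $E(t) := \sum_j \bigl(K_j(t-y_j) - K_j(t)\bigr)$. Admissibility makes the first bracket tend to $-\infty$, so the task reduces to controlling $E(t)$ as $|t| \to \infty$; by symmetry it suffices to treat $t \to +\infty$. Since each $K_j$ is concave on $(0,\infty)$, Lemma~\ref{lemma:simpleShifts}(1) tells us $K'_j$ is non-increasing there, so the limit $c_{j,+} := \lim_{t \to \infty} K'_j(t) \in [-\infty, \infty)$ exists. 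I would then split on whether any of these limits equals $-\infty$.

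The easy case is when every $c_{j,+}$ is finite. Using \eqref{NewtonLeibnitz}, I would write
\[
K_j(t-y_j) - K_j(t) = -\int_{t-y_j}^t K'_j(s)\,ds \longrightarrow -y_j c_{j,+}
\]
as $t \to \infty$, since the integrand on the fixed-length interval tends to $c_{j,+}$. Summing over $j$, $E(t)$ converges to the finite constant $-\sum_j y_j c_{j,+}$, so $J(t) + \sum_j K_j(t-y_j)$ differs from the admissible quantity by a bounded amount and hence also tends to $-\infty$.

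The main obstacle is the remaining case, when some $c_{j_0,+} = -\infty$: here $E(t)$ may well blow up and the naive decomposition collapses. My plan is to exploit that $K_{j_0}$ then decays super-linearly: for every $L > 0$ there exists $T_L$ with $K'_{j_0}(s) < -L$ for $s > T_L$, hence $K_{j_0}(t-y_{j_0}) \le C_L - L t$ for $t$ large. Meanwhile each other kernel is bounded above by its tangent at a fixed $t_0$, $K_j(s) \le K_j(t_0) + K'_j(t_0)(s - t_0)$, which is linear in $s$. Substituting $s = t - y_j$ and summing, the coefficient of $t$ in the upper bound for $\sum_j K_j(t-y_j)$ is $-L + \sum_{j \neq j_0} K'_j(t_0)$, which becomes strictly negative once $L > 1 + \sum_{j \neq j_0} K'_j(t_0)$; thus $\sum_j K_j(t-y_j) \to -\infty$, and since $J$ is bounded above the whole expression does too. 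The case $t \to -\infty$ is handled identically, using $c_{j,-} := \lim_{t \to -\infty} K'_j(t) \in (-\infty, \infty]$ and singling out the case when some $c_{j_0,-}$ equals $+\infty$.
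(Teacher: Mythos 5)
Your proof is correct and takes essentially the same approach as the paper's: the same case split on whether every $K'_j$ stays bounded below for large $t$, with the Newton--Leibniz estimate for the shift error in the first case and a tangent-line bound for concave functions in the second. The only cosmetic differences are that you compute the exact limit $-y_j c_{j,+}$ where the paper is content with the cruder bound $K_j(t-y_j)-K_j(t)\le C|y_j|$, and in the unbounded case you write separate tangent lines per kernel (isolating $K_{j_0}$) rather than a single tangent line for the whole sum $f(\yy,\cdot)$.
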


\begin{proof}
Let us prove the lemma for $t \to \infty$. If $t \to -\infty$, then we can consider $\widetilde{J}(t) := J(-t)$ and $\widetilde{K}_j(t) := K_j(-t), \ j = 1,\ldots,n,$ and apply what is proved for $t \to \infty$.

1. Let $y_i := \max \limits_{1 \le j \le n} \{ y_j \}$. Suppose that all $K'_j$ be bounded below for large arguments.
By our assumption and Part (a) of Lemma \ref{lemma:simpleShifts}, there are $C, \ L > 0$ such that
\begin{gather} \label{lemma:admissible:boundedBelow}
|K'_j(t)| \le C, \quad t \ge L, \ j=1,\ldots,n.
\end{gather}

We consider $t \ge L$ such that $\min \{t, t-y_i\} \ge L$.
Using \eqref{NewtonLeibnitz} and \eqref{lemma:admissible:boundedBelow}, we get
\[
K_j(t-y_j)-K_j(t) \le C |y_j|, \quad j=1,\ldots,n.
\]
Therefore, by the admissibility of $J$, we obtain
\[
J(t) + \sum \limits_{j=1}^n K_j(t-y_j) \le J(t) + \sum \limits_{j=1}^n (K_j(t) + C |y_j|) \to -\infty, \quad t \to \infty.
\]

2. Now assume that $K'_s$ is not bounded below for some $s \in \{1,\ldots,n\}$ and for~large~$t.$
Denote 
\[
f(\yy,t) := \sum \limits_{j=1}^n K_j(t-y_j).
\]
Note that $f(\yy,\cdot)$ is concave on $(y_i, \infty)$. Take arbitrary point $t_0 \in (y_i, \infty)$ where $f'_t$ exists. For any $t \in (y_i, \infty)$, we have \cite[p. 12, Th. D]{RobertsVarberg}
\begin{gather} \label{lemma:admissible:tangentLine}
f(\yy,t) \le f'_t(\yy,t_0) \cdot (t-t_0) + f(\yy,t_0).
\end{gather}
By Part (a) of Lemma \ref{lemma:simpleShifts} for $K'_j, \ j=1\ldots,n,$ and our assumption regarding $K_s$, we conclude that $f'_t(\yy,t_0) < 0$ for large $t_0$ as a sum of non-increasing functions and a function not bounded below.
Due to \eqref{lemma:admissible:tangentLine}, we obtain
\[
\lim \limits_{t \to \infty} f(\yy,t) = -\infty.
\]
Since $J$ is bounded above, we finally get
\[
F(\yy,t) = J(t) + f(\yy,t) \to -\infty, \quad t \to \infty.
\] 
\end{proof}

\section{Behavior of sums of translates for large arguments}
In this section, we prove a key lemma which allows us to reduce our problem on the axis to the case of the segment. Another form of this statement was proven for sums of translates with positive multiples of a single kernel in \cite[Lemma~4.3]{MyMinimax}. Now we are dealing with several kernels and we need a slightly modified estimation for kernels, so we will provide a proof.

The following statement is well-known, see, e.g., \cite[Lemma 4.1]{MyMinimax}.
\begin{NotOurLemma}
\label{lemma:uniformContin}
Suppose that a function $g$ is concave, non-decreasing on the semiaxis $[M,\infty)$ and is continuous at $M.$ Then $g$ is uniformly continuous on $[M,\infty)$.
\end{NotOurLemma}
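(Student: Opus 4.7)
The plan is to reduce uniform continuity of $g$ on the whole semiaxis $[M,\infty)$ to the single-point continuity of $g$ at $M$, by exploiting concavity to transfer any oscillation of $g$ on a short interval to an oscillation at the left endpoint $M$. The key observation is that for a concave function on $[M,\infty)$ and any fixed $h>0$, the forward difference $x \mapsto g(x+h) - g(x)$ is nonincreasing in $x$; equivalently, sliding an interval of length $h$ to the right does not increase the corresponding increment of $g$.

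To verify this monotonicity, I would invoke \eqref{NewtonLeibnitz}: for $M \le x \le y$ one writes
\[
g(x+h) - g(x) = \int_{x}^{x+h} g'(t)\, dt, \qquad g(y+h) - g(y) = \int_{y}^{y+h} g'(t)\, dt,
\]
and since $g'$ is nonincreasing where defined (by point~1 of Lemma~\ref{lemma:simpleShifts}, or directly from concavity) while $x+s \le y+s$ for every $s \in [0,h]$, a pointwise comparison of the integrands after the substitution $t = x+s$, $t = y+s$ gives $g(y+h) - g(y) \le g(x+h) - g(x)$. Specializing to $x = M$ yields the crucial estimate
\[
0 \le g(y+h) - g(y) \le g(M+h) - g(M), \qquad y \ge M, \ h > 0,
\]
where the lower bound uses that $g$ is nondecreasing.

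Given $\varepsilon > 0$, the (right-)continuity of $g$ at $M$ provides $\delta > 0$ such that $0 \le g(M+h) - g(M) < \varepsilon$ for every $0 \le h < \delta$. Combining this with the forward-difference estimate above, we conclude that $|g(y+h) - g(y)| < \varepsilon$ for all $y \ge M$ and $0 \le h < \delta$, which is precisely uniform continuity on $[M,\infty)$. There is no real obstacle in the argument; the only mild subtlety is that ``continuous at $M$'' must be read as right-continuity from within $[M,\infty)$, which is the only meaningful interpretation in context.
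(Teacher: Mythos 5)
Your proof is correct. The paper itself does not prove this lemma; it simply cites it as well-known (referring to \cite[Lemma 4.1]{MyMinimax}), so there is nothing to compare against, but your argument is the standard and essentially unique route: concavity forces the forward difference $h \mapsto g(y+h)-g(y)$ to be nonincreasing in $y$, monotonicity pins it at $0$ from below, and continuity at the left endpoint $M$ then gives a uniform modulus $\delta(\varepsilon)$ for all $y \ge M$. One small remark: the integral form via \eqref{NewtonLeibnitz} applied with $a=M$ is legitimate precisely because you assume continuity at $M$ (a concave function on $[M,\infty)$ could otherwise drop at the left endpoint, and \eqref{NewtonLeibnitz} would fail there); alternatively, the monotonicity of forward differences follows directly from the three-chord slope inequality without invoking the integral representation at all.
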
 

We also need the following lemma. 
\begin{Lem} \label{lemma:-infty}
If $K_j \colon (-\infty, 0) \cup (0, \infty) \to \RR, \ j=1,\ldots,n,$ are kernels and $J \colon \RR \to \uRR$ is an admissible field, then for any $\yy \in \Sinf$
we have
\[
\lim \limits_{\xx \to \yy, \ |t| \to \infty}
F(\xx,t) = -\infty.
\]
\end{Lem}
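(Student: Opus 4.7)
The plan is to handle only the one-sided limit $t \to +\infty$, since the case $t \to -\infty$ follows by applying the same argument to the reflected kernels $\widetilde{K}_j(t) := K_j(-t)$ and field $\widetilde{J}(t) := J(-t)$, exactly as in the proof of Lemma \ref{lemma:admissible}. Fix $\yy \in \Sinf$. The aim is to show that for every $M > 0$ there exist $\delta > 0$ and $T > 0$ such that $F(\xx, t) < -M$ whenever $\max_j |x_j - y_j| < \delta$ and $t > T$. Restricting $\xx$ to such a neighbourhood of $\yy$ and requiring $t$ sufficiently large guarantees that each $t - x_j$ stays uniformly bounded away from $0$, so the singularities of the $K_j$ play no role. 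Following the dichotomy used in Lemma \ref{lemma:admissible}, I split into two cases according to whether all $K'_j$ are bounded below on some right half-line or not.

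\emph{Case 1.} Suppose there exist $C, L > 0$ with $|K'_j(s)| \le C$ for all $s \ge L$ and every $j$. For $\xx$ close to $\yy$ and $t$ so large that $t - x_j \ge L$ for every $j$, the Newton--Leibniz formula \eqref{NewtonLeibnitz} for concave functions yields
\[
|K_j(t - x_j) - K_j(t - y_j)| \le C |x_j - y_j| \le C\delta,
\]
and summing over $j$ gives $F(\xx, t) \le F(\yy, t) + n C \delta$. Since $F(\yy, t) \to -\infty$ as $t \to +\infty$ by Lemma \ref{lemma:admissible}, fixing any $\delta > 0$ and then choosing $T$ large enough yields the required uniform bound.

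\emph{Case 2.} Suppose some $K'_i$ is not bounded below, so that $K'_i(s) \to -\infty$ as $s \to \infty$. Fix any $\delta > 0$ and pick $t_0$ so large that $t_0 - y_j - \delta > 0$ for every $j$. Because each $K'_j$ is non-increasing on $(0,\infty)$ by Lemma \ref{lemma:simpleShifts}, for every $\xx$ with $\max_j |x_j - y_j| < \delta$,
\[
f'_t(\xx, t_0) := \sum_{j=1}^n K'_j(t_0 - x_j) \le \sum_{j=1}^n K'_j(t_0 - y_j - \delta),
\]
and the right-hand side is an $\xx$-independent number that can be made arbitrarily negative by enlarging $t_0$, thanks to $K'_i(s) \to -\infty$. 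Since $f(\xx, \cdot) := \sum_j K_j(\cdot - x_j)$ is concave on $(\max_j x_j, \infty)$, the tangent-line inequality \eqref{lemma:admissible:tangentLine} gives, for all $t > t_0$,
\[
F(\xx, t) \le \sup J + f(\xx, t_0) + f'_t(\xx, t_0)(t - t_0).
\]
The term $f(\xx, t_0)$ depends continuously on $\xx$ on the closed neighbourhood $\max_j |x_j - y_j| \le \delta$ and so is bounded there; combined with the uniformly large negative slope $f'_t(\xx, t_0)$, the right-hand side tends to $-\infty$ uniformly in such $\xx$.

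The main obstacle is Case 2: when $K'_i(s) \to -\infty$, even a tiny perturbation of $x_i$ can alter $K_i(t - x_i)$ by an unbounded amount, so the naive comparison of $F(\xx, t)$ with $F(\yy, t)$ used in Case 1 breaks down. The key device will be to pass to the tangent line at a carefully chosen point $t_0$ and then use the monotonicity of $K'_j$ to replace the $\xx$-dependent slope by an explicit $\xx$-independent upper bound; once such a uniform slope bound is in hand, linear decay in $t$ delivers the desired joint limit.
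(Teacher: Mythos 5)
Your proposal is correct in substance but follows a genuinely different route from the paper. The paper partitions the kernels themselves into the set $I_1$ of those non-decreasing on $(0,\infty)$ (handled via Lemma~\ref{lemma:uniformContin} on uniform continuity of a concave, non-decreasing function, giving $K_j(t-x_j)\le K_j(t-y_j)+C_j$) and the set $I_2$ of the remaining ones (which, being concave and not non-decreasing, eventually decrease, so $K_j(t-x_j)\le K_j(t-c)$ for $x_j\le c$); it then applies Lemma~\ref{lemma:admissible} \emph{once}, to the fixed translate vector with entries $y_j$ for $j\in I_1$ and $c$ for $j\in I_2$, to conclude. You instead mirror the dichotomy inside the proof of Lemma~\ref{lemma:admissible}: either every $K'_j$ is bounded below on a right half-line, or some $K'_i$ is not. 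Your Case~1 is a Lipschitz comparison $F(\xx,t)\le F(\yy,t)+nC\delta$ plus Lemma~\ref{lemma:admissible} for the fixed $\yy$; your Case~2 does not invoke Lemma~\ref{lemma:admissible} at all but re-derives the decay via the tangent-line estimate with an $\xx$-uniform negative slope bound $f'_t(\xx,t_0)\le\sum_j K'_j(t_0-y_j-\delta)$. Both are valid. The paper's version is somewhat tighter in that a single application of Lemma~\ref{lemma:admissible} absorbs the whole uniformity question, whereas you pay for uniformity in Case~2 by redoing the slope argument; on the other hand your proof is self-contained in Case~2 and avoids appealing to the auxiliary uniform-continuity Lemma~\ref{lemma:uniformContin}.

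One small technical point worth fixing: you use the tangent-line inequality at a single $t_0$ across the whole closed $\delta$-ball of $\xx$'s, but for a fixed $t_0$ the derivative $K'_j(t_0-x_j)$ need not exist for every $x_j$ in that ball. This is harmless, but you should say you are working with one-sided (say right-hand) derivatives $K'_{j,+}$, which exist everywhere on $(0,\infty)$ for a concave function, are non-increasing, satisfy the same supporting-line inequality, and still have $K'_{i,+}(s)\to-\infty$ in Case~2; then $t_0$ need not be a genuine differentiability point.
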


\begin{proof}
Let us prove the lemma for $t \to \infty.$ For $t \to -\infty$ the proof is carried out by considering the reflection $\widetilde{K}(t) := K(-t), \ \widetilde{J}(t) = J(-t).$

Denote $I_1 := \{j: \ K_j \text{ is non-decreasing on } (0, \infty)\}.$ Let $j \in I_1.$ 
By Lemma \ref{lemma:uniformContin}, the function $K_j$ is uniformly continuous on $[1,\infty)$. Hence if $\|\xx - \yy\|$ is sufficiently small, then there exists $C_j > 0$ such that for large $t$ we have $K_j(t-x_j) - K_j(t-y_j) \le C_j$.

Now consider $I_2 := \{j: \ K_j \text{ is not non-decreasing on } (0, \infty)\}$ and $j \in I_2.$ By the concavity, $K_j$ decreases for large $t$. Since $\xx$ converges, there is $c$ such that $x_j \le c$ for all $j.$ So, for large $t$ we have that $K_j(t-x_j) \le K_j(t-c).$

Using the inequalities above, we get for large $t$ and for $\xx$ sufficiently close to $\yy$
\[
F(\xx,t) = J(t) + \sum \limits_{j =1}^n K_j(t-x_j) \le
J(t) + \sum \limits_{j \in I_1} (K_j(t-y_j) + C_j)+ \sum \limits_{j \in I_2} K_j(t-c).
\]
Applying Lemma \ref{lemma:admissible}, together with the fact that for all $j \in J_2, \ \lim \limits_{t \to \infty} K_j(t-c)$ is finite or $-\infty$, i.e., it is strictly less that $\infty,$ we obtain $\lim \limits_{\xx \to \yy, \ t \to \infty} F(\xx,t) = -\infty.$ 
\end{proof}

Now, let us prove the main lemma of this section.
\begin{Lem} \label{lemma:tau} Let $K_j \colon (-\infty, 0) \cup (0, \infty) \to \RR, \ j = 1,\ldots,n,$ be kernels and $J: \RR \to \uRR$ be an admissible field. 
Then for any $N \in \NN$ there is a number $\tau_N > N$ such that for each $ \yy \in \Simplex{N}$
\[
\left( t < -\tau_N \implies F(\yy,t) \le m_0(\yy) - 1 \right) 
\ \text{and} \
\left( t > \tau_N \implies F(\yy,t) \le m_n(\yy) - 1 \right).
\]
In particular, this implies that
\begin{gather} \label{lemma:tau:maxima}
m^{\tau_N}_j(\yy) = m_j(\yy), \quad \yy \in \Simplex{N}, \quad j = 0, \ldots, n.
\end{gather}
\end{Lem}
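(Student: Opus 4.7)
The plan is to exploit Lemma~\ref{lemma:-infty} together with the compactness of the simplex $\Simplex{N}$. As in the proofs of Lemmas~\ref{lemma:admissible} and~\ref{lemma:-infty}, by the reflection $t \mapsto -t$, $\yy \mapsto -\yy$ it suffices to produce $\tau_N^+ > N$ such that $F(\yy, t) \le m_n(\yy) - 1$ for all $\yy \in \Simplex{N}$ and $t > \tau_N^+$; the final $\tau_N$ is then the maximum of $\tau_N^+$ and its left analogue.

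I would split on a simple dichotomy involving the field. If $J(t) = -\infty$ for every $t > N$, then for any $\yy \in \Simplex{N}$ and any $t > N$ the sum $F(\yy, t) = J(t) + \sum_j K_j(t - y_j)$ equals $-\infty$, so the choice $\tau_N^+ := N+1$ already works (with the convention that $-\infty \le m_n(\yy) - 1$ regardless of the value of $m_n(\yy)$). Otherwise, I would fix an auxiliary $s > N$ with $J(s) \in \RR$. For every $\yy \in \Simplex{N}$ one has $s - y_j \ge s - N > 0$, so each $K_j(s - y_j)$ is finite; hence $\yy \mapsto F(\yy, s)$ is a continuous, everywhere finite function on the compact simplex $\Simplex{N}$, and $c := \min_{\yy \in \Simplex{N}} F(\yy, s)$ is a finite real number. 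Since $s > y_n$, this gives the uniform lower bound $m_n(\yy) \ge F(\yy, s) \ge c$ on $\Simplex{N}$.

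With such a $c$ in hand, I would apply Lemma~\ref{lemma:-infty} at each $\yy^0 \in \Simplex{N}$ with target level $c - 1$ to extract $\delta(\yy^0) > 0$ and $T(\yy^0) > N$ with $F(\xx, t) \le c - 1$ whenever $\|\xx - \yy^0\| < \delta(\yy^0)$ and $|t| > T(\yy^0)$. A finite subcover of $\Simplex{N}$ by the balls $B(\yy^0_i, \delta(\yy^0_i))$ exists by compactness, and setting $\tau_N^+$ equal to the maximum of the finitely many $T(\yy^0_i)$ yields $F(\yy, t) \le c - 1 \le m_n(\yy) - 1$ for all $\yy \in \Simplex{N}$ and $t > \tau_N^+$. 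The identity $m^{\tau_N}_j(\yy) = m_j(\yy)$ in \eqref{lemma:tau:maxima} is then immediate: for $j = 1, \ldots, n-1$ the interval $[y_j, y_{j+1}]$ is unchanged since $\Simplex{N} \subset \Simplex{\tau_N}$, while for $j = 0$ and $j = n$ the estimate just proved forces the value of $F$ on the tails $(-\infty, -\tau_N)$ and $(\tau_N, \infty)$ to stay at most $m_0(\yy) - 1$, respectively $m_n(\yy) - 1$, so the tails cannot influence the respective suprema.

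The main obstacle, to my mind, is purely bookkeeping: one would naively like to bound $m_n$ below on $\Simplex{N}$ by a continuity argument, but $m_n$ is only lower semicontinuous and can genuinely equal $-\infty$ on a nonempty subset. The dichotomy above is designed precisely to sidestep this, because the existence of even one $\yy^0 \in \Simplex{N}$ with $m_n(\yy^0) = -\infty$ already forces $J \equiv -\infty$ on $(y_n^0, \infty) \supseteq (N, \infty)$: on $(y_n^0, \infty)$ each $K_j(\cdot - y_j^0)$ is finite, so $F(\yy^0, t) = -\infty$ there forces $J(t) = -\infty$, placing us in the trivial branch of the dichotomy.
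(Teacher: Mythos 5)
Your proof is correct, and it takes a genuinely different route from the paper's. The paper argues by contradiction: assuming the conclusion fails along a sequence $(\yy_M, t_M)$, it passes to a convergent subsequence in the compact set $\Simplex{N}$, invokes Lemma~\ref{lemma:-infty}, and deduces that $J\equiv-\infty$ on the relevant tail --- which then forces $F(\yy_M,t_M)=-\infty$ and contradicts the assumed lower bound. Your proof is direct: you isolate the degenerate case $J\equiv-\infty$ on $(N,\infty)$ at the outset (which is exactly the case the paper's contradiction lands in), and in the complementary case you fix a point $s>N$ with $J(s)\in\RR$ to get a uniform finite lower bound $m_n(\yy)\ge c$ from continuity of $\yy\mapsto F(\yy,s)$ on the compact simplex, then turn the pointwise limit in Lemma~\ref{lemma:-infty} into a uniform one by a finite-subcover argument. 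Both proofs ultimately rest on the same two ingredients --- Lemma~\ref{lemma:-infty} and compactness of $\Simplex{N}$ --- but you exploit compactness through a covering while the paper uses sequential compactness, and you trade the contradiction for an explicit dichotomy. The paper's version is shorter; yours has the advantage of making the uniform $\tau_N$ and the role of the degenerate field transparent, and it produces a uniform level $c-1$ independent of $\yy$ (slightly more than the lemma asks for). One small point worth making explicit when writing this up: in the reflected problem the kernels must also be reindexed (send $K_j$ to $\widetilde K_j(t):=K_{n+1-j}(-t)$) so that $\widetilde m_n$ corresponds to $m_0$; this is routine but easy to misstate.
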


\begin{proof}
Let us prove the statement for $t < -\tau_N$. The proof of the second part is similar.

Assume for a contradiction that for some $N \in \NN$
\[
\forall M \in \NN, \ M > N \quad \exists \,\yy^M \in \Simplex{N} \quad \exists\, t^M\in \RR \quad \left(t^M < -M \quad \& \quad F(\yy^M,t^M) > m_0(\yy^M) - 1 \right).
\]
We have the bounded sequence $\{\yy^M\} \subset \Simplex{N}.$ By the Bolzano-Weierstrass theorem, there is a convergent subsequence $\{\yy^{M_k}\}.$
Set
\[
\lim \limits_{k \to \infty} \yy^{M_k} =: \yy^{*} = (y^*_{1},\ldots,y^*_{n}) \in \Simplex{N}.
\]
Take $t \in (-\infty, -N)$. By our assumption, we have for all $k$ with $-M_k < t < -N$
\[
J(t) = F(\yy^{M_k},t) - \sum \limits_{j=1}^n K_j(t - y^{M_k}_j) \le F(\yy^{M_k},t^{M_k}) + 1 - \sum \limits_{j=1}^n K_j(t - y^{M_k}_j).
\]
Using continuity of $K_j$ at $t - y^*_j < 0$ and Lemma \ref{lemma:-infty}, we get
\begin{align*}
J(t) & \le \lim \limits_{k \to \infty} \left( F(\yy^{M_k},t^{M_k}) + 1 - \sum \limits_{j=1}^n K_j(t - y^{M_k}_j) \right) \\
& = \lim \limits_{k \to \infty} F(\yy^{M_k},t^{M_k}) + 1 - \sum \limits_{j=1}^n K_j(t - y^*_j) = -\infty.
\end{align*}
So, $J(t) \equiv -\infty$ for $t < -N.$ We have a contradiction with our assumption, since then also $F(\yy^M, t^M) = -\infty.$
\end{proof}

\section{Local homeomorphism}
In this section, we prove a statement about local homeomorphism by reducing the problem to a segment. For $N \in \NN$ take $\tau_N > N$ from Lemma~\ref{lemma:tau}.
Consider
\[
K^{\tau_N}_j := K_j \vert_{[-2\tau_N,2\tau_N]}, \quad J^{\tau_N} := J \vert_{[-\tau_N,\tau_N]}.
\]

Obviously, if $K_j$ is a singular (strictly) concave kernel function, then $K^{\tau_N}_j$ has the same properties. And it is also clear that if $J$ is an $n$-field function on $\RR$, then $J^{\tau_N}$ is an $n$-field on $[-\tau_N,\tau_N]$ for large $N$.

Note that by \eqref{lemma:tau:maxima},
\begin{gather}
\label{differenceFunc:identity}
\Dif^{\tau_N}(\yy) = \Dif(\yy), \quad \yy \in \Reg^N,
\end{gather}
where $R^N$ is the regularity set for $F \vert_{[-N,N]}.$

Farkas, Nagy and R\'{e}v\'{e}sz in \cite{FNR} established the results mentioned in this section for sums of translates on the segment $[0,1]$. By applying the reasoning from Remark~\ref{reductionToZeroOne}, these results can be extended to the segment $[a,b]$.

\begin{Lem} \label{lemma:connectedness}
Let $K_j \colon (-\infty, 0) \cup (0, \infty) \to \RR, \ j = 1,\ldots,n,$ be singular kernels and $J: \RR \to \uRR$ be an admissible field. Then the regularity set $\Reg$ is open and pathwise connected.
\end{Lem}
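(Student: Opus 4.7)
Both statements will be transferred from the segment setting, where the analogous facts about $\Reg^{[a,b]}$ are due to Farkas, Nagy and R\'{e}v\'{e}sz (extended to an arbitrary $[a,b]$ as in Remark~\ref{reductionToZeroOne}). The transfer is powered by Lemma~\ref{lemma:tau}: for $\yy \in \Simplex{N}$ one has $m_j(\yy) = m^{\tau_N}_j(\yy)$ for all $j$, hence $\Reg \cap \Simplex{N} = \Reg^{\tau_N} \cap \Simplex{N}$. Moreover, for \emph{any} $\xx \in \Reg^{\tau_N}$, the middle maxima satisfy $m_j(\xx) = m^{\tau_N}_j(\xx)$ for $1 \le j \le n-1$, while for $j = 0, n$ clearly $m_j(\xx) \ge m^{\tau_N}_j(\xx) > -\infty$; thus $\Reg^{\tau_N} \subset \Reg$ unconditionally.

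\emph{Openness.} Fix $\yy^* \in \Reg$ and pick $N$ so large that $|y^*_j| < N$ for every $j$; let $\tau_N > N$ be as in Lemma~\ref{lemma:tau}. There is an open neighbourhood $V$ of $\yy^*$ in $\Sinf$ contained in $\Simplex{N}$. Using the identity $\Reg \cap V = \Reg^{\tau_N} \cap V$ together with the openness of $\Reg^{\tau_N}$ in $\Simplex{\tau_N}$, a sufficiently small subneighbourhood of $\yy^*$ in $V$ lies entirely in $\Reg$.

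\emph{Pathwise connectedness.} For arbitrary $\yy^{(0)}, \yy^{(1)} \in \Reg$, pick $N$ with $\yy^{(0)}, \yy^{(1)} \in \Simplex{N}$. Lemma~\ref{lemma:tau} then places both points in $\Reg^{\tau_N}$. By the segment-level pathwise connectedness of $\Reg^{[-\tau_N,\tau_N]}$, there is a continuous path joining them inside $\Reg^{\tau_N}$, and by the inclusion $\Reg^{\tau_N} \subset \Reg$ established above, this path stays inside $\Reg$.

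\emph{Main obstacle.} The only non-routine point is that the segment-level inputs (openness and pathwise connectedness of $\Reg^{[a,b]}$) should be applied to the restricted data $\segmK_j = K^{\tau_N}_j$, $\segmJ = J^{\tau_N}$ on $[-\tau_N, \tau_N]$. In particular, one must verify that the monotonicity hypothesis~\eqref{theorem:FNR:kernel} on this segment is implied by \eqref{def:GM}; this is immediate from Lemma~\ref{lemma:simpleShifts}, since \eqref{lemma:simpleShifts:eq2:derivative} gives $K'_j(t - 2\tau_N) \le K'_j(t)$ almost everywhere on $(0, 2\tau_N)$. Beyond this verification the argument is essentially mechanical.
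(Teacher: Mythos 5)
Your approach is essentially the same as the paper's: reduce to the segment and invoke Farkas–Nagy–R\'{e}v\'{e}sz's Prop.~4.1 on openness and pathwise connectedness of $\segmReg$. The paper's proof is considerably shorter — it simply observes that $R = \bigcup_{N} R^N$ and then asserts that a union of open, pathwise connected sets has these properties (implicitly using that the $R^N$ are \emph{nested increasing}, which is what makes the connectedness claim legitimate and lets a point of $R$ be viewed as interior to $\Simplex{N'}$ for a larger $N'$ when arguing openness in $\Sinf$). Your version is more explicit about exactly these relative-topology points, which is a genuine plus; on the other hand, routing everything through $\tau_N$ and Lemma~\ref{lemma:tau} is heavier than necessary, since the direct identification $R=\bigcup_N R^N$ already suffices.

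One point deserves correction. In your "Main obstacle" paragraph you claim one must check the monotonicity hypothesis~\eqref{theorem:FNR:kernel} before invoking the segment-level result, and you derive it from~\eqref{def:GM}. But \eqref{def:GM} is \emph{not} among the hypotheses of this lemma (only singularity of the kernels and admissibility of the field are assumed), so if that check were truly needed, your argument would be relying on an unavailable assumption. In fact it is not needed: Prop.~4.1 of \cite{FNR}, which supplies the openness and pathwise connectedness of $\segmReg$, requires only that the kernels be singular; condition \eqref{theorem:FNR:kernel} enters later, for the local-homeomorphism and homeomorphism statements (Lemma~\ref{lemma:localHomSegm} and Theorem~\ref{theorem:FNR}), not for the topological description of the regularity set. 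So the paragraph is a harmless detour rather than a gap, but it reflects a misreading of which hypotheses the cited proposition actually uses.
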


\begin{proof}
Note that
\[
R = \bigcup \limits_{N \in \NN} R^N.
\]
Farkas, Nagy and R\'{e}v\'{e}sz proved in \cite[Prop.~4.1]{FNR} that the sets $R^N$ are open and pathwise connected.
Hence $R$ is also open and pathwise connected as a union of sets with these properties.
\end{proof}

We will show that an analogue of the following property \cite[Lemma~7.3]{FNR} for $\segmDif$ holds for $\Dif$.

\begin{NotOurLemma} \label{lemma:localHomSegm}
Let $\segmK_j: (a-b,0) \cup (0,b-a) \to \underline{\RR}, \ j = 1,\ldots,n,$ be singular kernels and $\segmJ: [a,b] \to \uRR$ be an $n$-field function.
Assume that for $j \in \{1,\ldots,n\}$
\[
\left(\segmK_j(t)-\segmK_j(t-(b-a))\right)' \ge 0 \quad \text{almost everywhere on $(0,b-a)$},
\]
and for any $\yy \in \segmReg$ there exists $\eta > 0$ such that
\begin{equation}
\label{localHomSegm:nearEndPoints}
\begin{aligned} 
\text{either} \ \segmF(\yy,t) & \le m^{[a,b]}_0(\yy)-1, \ t \in [a,a+\eta] \\
\text{or} \ \segmF(\yy,t) & \le m^{[a,b]}_n(\yy)-1, \ t \in [b-\eta,b].    
\end{aligned}
\end{equation}
Then the difference function $\segmDif$ is a local homeomorphism. Moreover, $\segmDif$ is locally bi-Lipschitz.
\end{NotOurLemma}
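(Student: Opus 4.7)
The statement is the segment-case counterpart of what will eventually be transferred to $\Dif$ on $\RR$, and the author already flags in the paragraph preceding it that the proof is essentially that of \cite[Lemma~7.3]{FNR} combined with the affine reparametrization argument of Remark~\ref{reductionToZeroOne}. My plan is therefore to deduce Lemma~B from the known $[0,1]$-version by a linear change of variables, treating the deep part (the Jacobian analysis on $[0,1]$) as a black box.

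Concretely, I would set $\cK_j(x) := \segmK_j((b-a)x)$ for $x \in [-1,1]$ and $\cJ(x) := \segmJ(a+(b-a)x)$ for $x \in [0,1]$, and let $\boldsymbol\chi(\yy) := (\chi(y_1),\ldots,\chi(y_n))$ with $\chi(t) := (t-a)/(b-a)$. The first step is to check that the hypotheses of Lemma~B on $[a,b]$ transfer under $\chi$ to the corresponding hypotheses on $[0,1]$: singularity and strict concavity pass trivially, the $n$-field property of $\cJ$ is immediate, and the kernel inequality $\bigl(\cK_j(x) - \cK_j(x-1)\bigr)' \ge 0$ almost everywhere on $(0,1)$ is exactly the computation already carried out in Remark~\ref{reductionToZeroOne}. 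For the cusp-type condition \eqref{localHomSegm:nearEndPoints} I would use the identity $\segmF(\yy,t) = \cF(\boldsymbol\chi(\yy),\chi(t))$ together with $m^{[a,b]}_j(\yy) = m^{[0,1]}_j(\boldsymbol\chi(\yy))$ to show that if the estimate holds on $[a,a+\eta]$, then it holds on $[0, \eta/(b-a)]$ for $\cF$ at $\boldsymbol\chi(\yy)$, and symmetrically at the right endpoint.

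With these verifications in hand, \cite[Lemma~7.3]{FNR} applied on $[0,1]$ yields that $\cDif$ is a local homeomorphism at $\boldsymbol\chi(\yy) \in \cReg$ and is locally bi-Lipschitz. Since $\boldsymbol\chi$ is an affine, hence bi-Lipschitz, bijection between $\segmReg$ and $\cReg$, and since $\segmDif \equiv \cDif \circ \boldsymbol\chi$ by Remark~\ref{reductionToZeroOne}, both properties transfer immediately to $\segmDif$.

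The only substantive obstacle would be if one had to reprove \cite[Lemma~7.3]{FNR} itself, but the plan avoids that by invoking it. Were I to redo that result, the hard part would be controlling the Jacobian of $\segmDif$: at a regular $\yy$ where each $m_j$ is attained at a unique interior argmax $t_j^{\ast}$, an envelope argument gives $\partial m_j/\partial y_i = -\segmK_i'(t_j^{\ast}-y_i)$, the singularity of the kernels together with the cusp condition \eqref{localHomSegm:nearEndPoints} keeps each $t_j^{\ast}$ away from the endpoints and from the nodes so these derivatives are genuine, and the monotonicity assumption on $\segmK_j(t) - \segmK_j(t-(b-a))$ is what forces a sign pattern on the Jacobian entries that yields invertibility and the local bi-Lipschitz estimate via the inverse function theorem. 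Here, however, all of that is packaged into \cite[Lemma~7.3]{FNR}, and the role of the proof I am proposing is purely the change-of-variables bookkeeping.
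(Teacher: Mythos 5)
Your reduction to $[0,1]$ by the affine map $\chi$ is exactly the route the paper itself takes: the paper cites this lemma from \cite{FNR}, and Remark~\ref{reductionToZeroOne} is precisely the change-of-variables bookkeeping you lay out. The transfer of the cusp condition \eqref{localHomSegm:nearEndPoints} via the identity $\segmF(\yy,t)=\cF(\boldsymbol\chi(\yy),\chi(t))$, the equality $m^{[a,b]}_j(\yy)=m^{[0,1]}_j(\boldsymbol\chi(\yy))$, the transport of the kernel-difference inequality, and the final observation that $\boldsymbol\chi$ is affine and bijective so bi-Lipschitzness is preserved, all match the paper. The last paragraph sketching the Jacobian/envelope argument is a fair summary of what sits inside the black box, and since you are explicitly not reopening it, there is nothing to fault there.

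The one point you slide past, and which the paper is careful to address immediately after the statement, is that \cite[Lemma~7.3]{FNR} as literally stated does \emph{not} take \eqref{localHomSegm:nearEndPoints} as a hypothesis: it assumes the endpoint-degeneracy condition \eqref{theorem:FNR:field} for the field. So after you transfer \eqref{localHomSegm:nearEndPoints} to $[0,1]$ and then write ``apply \cite[Lemma~7.3]{FNR}'', you are not in a position to invoke the \emph{statement} of that lemma, because you do not have \eqref{theorem:FNR:field} for $\cJ$. What licenses the step is the observation, stated in the paper right after Lemma~B, that the \emph{proof} of \cite[Lemma~7.3]{FNR} uses \eqref{theorem:FNR:field} only to derive \eqref{localHomSegm:nearEndPoints} at the outset and from then on proceeds solely from \eqref{localHomSegm:nearEndPoints}. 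In other words, the $[0,1]$-version you are invoking is a mild strengthening extracted from the proof, not the quoted theorem. This is a presentational rather than a mathematical gap, but it should be made explicit, otherwise the invocation looks like an application of a hypothesis you have not verified.
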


In fact, Farkas, Nagy and R\'{e}v\'{e}sz assume conditions \eqref{theorem:FNR:field} instead of \eqref{localHomSegm:nearEndPoints}.
In \cite[Lemma~7.3]{FNR} it is shown that \eqref{theorem:FNR:field} implies conditions \eqref{localHomSegm:nearEndPoints}, used in the proof of the local homeomorphism.
For $\RR$, conditions \eqref{localHomSegm:nearEndPoints} in the problem on $[-\tau_N,\tau_N]$ are consequences of Lemma~\ref{lemma:tau}. Note that the main role in the proof of Lemma~\ref{lemma:tau} is played by admissibility of the field, similar to conditions \eqref{theorem:FNR:field}.

\begin{Lem} \label{lemma:localHom}
Let $K_j \colon (-\infty, 0) \cup (0, \infty) \to \RR, \ j = 1,\ldots,n,$ be singular kernels satisfying \eqref{def:GM} and $J: \RR \to \uRR$ be an admissible field.
\begin{enumerate}
\item The difference function $\Dif$ is a local homeomorphism. Moreover, $\Dif$ is locally bi-Lipschitz.
\item The functions $m_j: \Sinf \to \uRR, \ j=0,\ldots,n,$ are continuous in the extended sense, i.e., as functions with values on the extended real axis.
\end{enumerate}
\end{Lem}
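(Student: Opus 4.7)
The plan is to reduce, as already announced at the start of the section, to the segment problem on a sufficiently large $[-\tau,\tau]$ and invoke Lemma~\ref{lemma:localHomSegm}. I would fix $\yy_0 \in \Reg$ for part~(1), or $\yy_0 \in \Sinf$ for part~(2), and pick $N \in \NN$ so large that a full $\Sinf$-neighborhood of $\yy_0$ sits inside $\Simplex{N}$; this is possible since $\yy_0$ has finite coordinates. Let $\tau_N$ be the constant from Lemma~\ref{lemma:tau}, and fix any $\tau > \tau_N$ (say $\tau := \tau_N + 1$). I would then restrict the data to $\segmK_j := K_j|_{[-2\tau, 2\tau]}$ and $\segmJ := J|_{[-\tau,\tau]}$ and consider the segment problem on $[-\tau,\tau]$.

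The next step is to verify the hypotheses of Lemma~\ref{lemma:localHomSegm} for this segment problem. The restricted kernels are clearly singular and strictly concave, and for $N$ sufficiently large $\segmJ$ is an $n$-field on $[-\tau,\tau]$. The hard part will be the shift monotonicity $(\segmK_j(t) - \segmK_j(t-2\tau))' \ge 0$ almost everywhere on $(0,2\tau)$, which is precisely the translation of the real-axis assumption \eqref{def:GM} into the segment setting; this follows immediately from \eqref{lemma:simpleShifts:eq2:derivative}, since for almost every $t \in (0,2\tau)$ the points $t$ and $t-2\tau$ lie on opposite sides of $0$. The near-endpoint condition \eqref{localHomSegm:nearEndPoints} will follow directly from Lemma~\ref{lemma:tau}: for every $\yy \in \Simplex{N}$ and every $t \in [-\tau,-\tau_N)$ we have $F(\yy,t) \le m_0(\yy) - 1$, and since $\tau > \tau_N$ this left-end interval has positive length, so any $\eta \in (0,\tau-\tau_N)$ will work.

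With the hypotheses verified, Lemma~\ref{lemma:localHomSegm} will give that $\segmDif$ is a locally bi-Lipschitz local homeomorphism on $\segmReg$. The same argument that produced \eqref{lemma:tau:maxima} for $\tau_N$ shows $m^\tau_j(\yy) = m_j(\yy)$ for all $\yy \in \Simplex{N}$ and all $j$: the pointwise bound from Lemma~\ref{lemma:tau} forces the sup on $[-\tau,\tau]$ defining $m^\tau_j$ to coincide with the sup on $\RR$ defining $m_j$. Consequently $\Dif(\yy) \equiv \segmDif(\yy)$ on the $\Sinf$-neighborhood of $\yy_0$ inside $\Simplex{N}$, so $\Dif$ inherits the local homeomorphism and local bi-Lipschitz property at $\yy_0$, giving~(1). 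For~(2), the same identity $m_j = m^\tau_j$, combined with continuity in the extended sense of the segment local maxima $m^\tau_j$ on $\Simplex{[-\tau,\tau]}$ — an ingredient of the FNR framework that underlies Lemma~\ref{lemma:localHomSegm} — will deliver continuity of $m_j$ at $\yy_0 \in \Sinf$. The main obstacle in this plan is precisely the shift monotonicity verification: once Lemma~\ref{lemma:simpleShifts} is in place it is essentially automatic, but it is exactly the point where \eqref{def:GM} plays its decisive role in enabling the reduction to Theorem~\ref{theorem:FNR}.
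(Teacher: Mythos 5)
Your proof is correct and follows the same reduction-to-the-segment strategy as the paper: restrict the kernels and field to $[-\tau,\tau]$ for a suitable $\tau$, verify that \eqref{lemma:simpleShifts:eq2:derivative} yields the shift-monotonicity hypothesis of Lemma~\ref{lemma:localHomSegm}, derive the near-endpoint condition \eqref{localHomSegm:nearEndPoints} from Lemma~\ref{lemma:tau}, and transfer the conclusion back to $\Dif$ and $m_j$ via the identity $m^\tau_j = m_j$ on $\Simplex{N}$. Your device of taking $\tau := \tau_N + 1$ so that $[-\tau, -\tau_N)$ has positive length is an equivalent reformulation of the paper's ``without loss of generality, assume Lemma~\ref{lemma:tau} holds for $\tau_N - 1$''; the rest of the argument matches the paper step by step.
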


\begin{proof}
\begin{enumerate}
\item 
Let us apply Lemma \ref{lemma:localHomSegm} with $[a,b] = [-\tau_N, \tau_N]$ and
\[
K^{\tau_N}_j := K_j \vert_{[-2\tau_N,2\tau_N]}, \quad 
J^{\tau_N} := J \vert_{[-\tau_N,\tau_N]}, \quad 
F^{\tau_N} := F \vert_{[-\tau_N,\tau_N]}
\]
for large $N$.
As discussed above, $K^{\tau_N}_j$ are singular kernels on $(-2\tau_N,2\tau_N)$, and $J^{\tau_N}$ is an $n$-field on $[-\tau_N,\tau_N]$. 

Without loss of generality, we can assume that the statement of Lemma \ref{lemma:tau} is true for $\tau_{N}-1$. Then for $\yy \in \Simplex{N} \cap \Reg^N$ and $m_i^{\tau_N}(\yy) = m_i(\yy),$ for $i=0,\ldots,n,$ we get
\[
F^{\tau_N}(\yy,t) \le m^{\tau_N}_0(\yy)-1, \ t \in [-\tau_N,-\tau_N+1]
\]
and
\[
F^{\tau_N}(\yy,t) \le m^{\tau_N}_n(\yy)-1, \ t \in [\tau_N-1, \tau_N].
\]
By \eqref{lemma:simpleShifts:eq2:derivative} for $j = 1,\ldots,n$ we have
\[
\left(K^{\tau_N}_j(t) - K^{\tau_N}_j(t-2\tau_N)\right)'_t \ge  0 \quad \text{almost everywhere on $(0,2\tau_N)$}.
\]
So, all conditions of Lemma \ref{lemma:localHomSegm} are satisfied. We obtain that the difference function $\Dif^{\tau_N}$ is a local homeomorphism, and it is locally bi-Lipschitz on $\Reg^N$. 
By \eqref{differenceFunc:identity}, $\Dif$ also has these properties on $\Reg^N$ and thus on $\Reg,$ since $N$ can be arbitrarily large.
\item In \cite[Lemma~3.3]{FNR} it is proven that the functions $m^{\tau_N}_j, \ j=0,\ldots,n,$ are continuous on $\Simplex{N}$ in the extended sense.
By \eqref{lemma:tau:maxima}, we conclude that $m_j, \ j=0,\ldots,n,$ are extended continuous on $\Simplex{N}$, and hence on $\Sinf$ as well due to the arbitrariness of $N$.
\end{enumerate}
\end{proof}

\section{Properness of the difference function}
The idea of the proof of the following lemma was communicated to us by Szil\'{a}rd Gy. R\'{e}v\'{e}sz. We already used this idea in \cite[Th.~3.1]{MyMinimax} with his permission. However, we now prove a statement of a different setting, and therefore we need a slightly different estimate in the proof. Moreover, we are dealing with several kernels satisfying \eqref{def:GM} instead of just one monotone kernel.

\begin{Lem} \label{lemma:MRS}
Assume that the kernels $K_j \colon (-\infty, 0) \cup (0, \infty) \to \RR, \ j = 1,\ldots,n,$ satisfy \eqref{def:GM}, and $J$ is an admissible field. Let $\{\yy^N\} \subset \Reg$ be an unbounded sequence convergent in the extended sense. Then there exists $i \in \{1, \ldots, n\}$ such that
\[
|m_{i}(\yy^N) - m_{i-1}(\yy^N)| \to \infty, \quad N \to \infty.
\]
\end{Lem}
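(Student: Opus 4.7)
The plan is by contradiction: suppose every $|m_i(\yy^N) - m_{i-1}(\yy^N)|$ stays bounded by a constant $C$, so that $m_0(\yy^N), \ldots, m_n(\yy^N)$ all lie in an interval of length at most $nC$. Using the reflection $t \mapsto -t$, which preserves both \eqref{def:GM} and the admissibility condition, I may assume $y_n^N \to +\infty$; after passing to a subsequence I may further assume every pairwise difference $y_j^N - y_k^N$ converges in $[-\infty, +\infty]$. This partitions the indices into a terminal cluster $\{i^*, \ldots, n\}$ for which $y_n^N - y_j^N$ stays bounded and a remaining set $\{1, \ldots, i^*-1\}$ for which $y_n^N - y_j^N \to +\infty$.

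The central step is the upper bound $m_n(\yy^N) \to -\infty$. For any $t \ge y_n^N$ I split the kernel sum at $i^*$. For $j \ge i^*$, boundedness of $y_n^N - y_j^N$ and Lemma~\ref{lemma:simpleShifts}(1) give $K_j(t - y_j^N) = K_j(t - y_n^N) + O(1)$. For $j < i^*$, the inequality \eqref{lemma:simpleShifts:eq2} (which is exactly where \eqref{def:GM} is used) lets me replace $K_j(t - y_j^N)$ by $K_j(t - y_n^N)$ up to a bounded additive error. Summing these estimates yields $F(\yy^N, t) \le J(t) + \sum_{j=1}^n K_j(t - y_n^N) + O(1)$, and after setting $s := t - y_n^N \ge 0$ this becomes $J(y_n^N + s) + \sum_j K_j(s) + O(1)$, which tends to $-\infty$ as $y_n^N \to +\infty$ (uniformly in $s \ge 0$) by Lemma~\ref{lemma:admissible}.

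To close the contradiction I exhibit some index $i$ with $m_i(\yy^N)$ bounded below. Because $J$ is an $n$-field, it is finite at strictly more than $n$ points, while the ``stationary'' coordinates of $\yy^N$ supply at most $n$ finite limits; hence I can select a fixed test point $t^*$ with $J(t^*) > -\infty$ whose distance from each finite limit $y_j^*$ is a positive constant and for which the kernel contributions from the escaping coordinates remain bounded (by Lemma~\ref{lemma:simpleShifts} applied on the appropriate side of $0$). Locating $t^*$ in the interior of one of the intervals defining $m_i$ for all large $N$ yields the uniform lower bound $m_i(\yy^N) \ge F(\yy^N, t^*) \ge -C'$. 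Combined with $m_n(\yy^N) \to -\infty$, this forces $m_i(\yy^N) - m_n(\yy^N) \to +\infty$, so by telescoping at least one $|m_k(\yy^N) - m_{k-1}(\yy^N)|$ must diverge, contradicting the standing hypothesis.

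The hard part is the uniform upper bound $m_n(\yy^N) \to -\infty$: shifts by the escaping coordinates could in principle move kernel values upward, and controlling them is precisely what \eqref{def:GM} --- via \eqref{lemma:simpleShifts:eq2} --- is designed for. This is the delicate point that distinguishes the present multi-kernel setting from the single monotone kernel version in \cite[Th.~3.1]{MyMinimax}.
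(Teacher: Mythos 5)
There is a genuine gap: the central claim, that $m_n(\yy^N)\to-\infty$ whenever $y_n^N\to+\infty$, is false. Consider $n$ identical kernels $K_j(t)=\log|t|-t$ (concave on each half-line, singular, strictly concave, with $K_j'(t)\to-1$ at both $\pm\infty$, so \eqref{def:GM} holds) and a field $J$ with $J\equiv 0$ on $[0,\infty)$ and $J(t)=\min\bigl(0,\,-n\log|t|-(n+1)|t|\bigr)$ for $t<0$; one checks $J$ is bounded above, finite at more than $n$ points, and admissible. Take $\yy^N=(N,N+1,\dots,N+n-1)$; then $\yy^N\in\Reg$ and, for $t\ge y_n^N$, writing $u:=t-y_n^N\ge 0$ one has $F(\yy^N,t)=\sum_{m=0}^{n-1}K(u+m)$, so $m_n(\yy^N)=\sup_{u\ge 0}\sum_{m=0}^{n-1}K(u+m)$ is a finite constant independent of $N$. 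The lemma's conclusion still holds here (because $m_0(\yy^N)\to+\infty$ while $m_1$ is constant), but your route to it collapses.

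The source of the error is twofold. First, for $j<i^*$ the quantity $y_n^N-y_j^N\to\infty$, so $t-y_j^N$ and $t-y_n^N$ are two \emph{positive} arguments whose gap is unbounded; inequality \eqref{lemma:simpleShifts:eq2} compares increments of $K_j$ straddling $0$ and says nothing here, and concavity alone does not bound an increment over an unboundedly long interval (already $K_j=\log$ refutes it). Second, even granting the replacement, the quantity $J(y_n^N+s)+\sum_j K_j(s)$ is not of the form $J(t)+\sum_j K_j(t-y_j)$ to which Lemma~\ref{lemma:admissible} applies; with $J\equiv 0$ on $[0,\infty)$ it simply does not tend to $-\infty$. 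The paper sidesteps both problems by choosing $i$ to be the \emph{first} escaping index and a fixed point $z\in(y_{i-1}^N,y_i^N)$ with $J(z)>-\infty$, and then estimating the \emph{relative} increment $K_j(t-y_j^N)-K_j(z-y_j^N)$ via \eqref{lemma:simpleShifts:eq1}--\eqref{lemma:simpleShifts:eq2}; this is a bounded-width comparison (between the arguments at $t$ and at $z$), so the potentially unbounded terms $K_j(z-y_j^N)$ cancel against $F(\yy^N,z)$, yielding $m_i(\yy^N)\le F(\yy^N,z)-L\le m_{i-1}(\yy^N)-L$ directly, with no need for a lower bound on any $m_{i'}$. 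You would need to restructure your argument along these lines rather than aiming for an absolute upper bound on $m_n$.
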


\begin{proof}
1. Denote $\yy^N := (y^N_1, \ldots, y^N_n), \ y^N_0 := -\infty, \ y^N_{n+1} := \infty.$ Since $\yy^N$ is convergent in the extended sense, then $y^N_n \to \infty$, or $y^N_1 \to -\infty$. Without loss of generality, we may assume the first case occurs, since the proof of the second case is similar. Let $i := \min \{j: y^N_j \to \infty\}.$

2. Let us show that there exists $z > 0$ such that $J(z) \neq -\infty$ and $z \in (y^N_{i-1}, y^N_{i})$ for $N$ large enough.

By definition of $i$, we have either $y^N_{i-1} \to -\infty$ or $y^N_{i-1} \to y_{i-1} \in \RR.$ In both cases, there exist $A > 0, \ N_0 \in \NN$ such that $y^N_{i-1} < A-1$ for all $N > N_0.$ Since $\{\yy^N\} \subset \Reg$ for all $N$, there are arbitrarily large points where $J$ is finite. Fix one such point $z > A.$ Then, for all $N > N_0$, we have $y^N_{i-1} < z - 1$. Moreover, since $y^N_i \to \infty$ as $N \to \infty$, there exists $N_1 > N_0$ such that $y^N_i > z+1$ for all $N > N_1$. For $N > N_1,$ we conclude that $z \in (y^N_{i-1},y^N_i)$ and $|z-y^N_j| > 1$ for all $j.$ Next we consider these $N$.

3. Let us estimate
\[
F(\yy^N,t) = J(t) +
\sum \limits_{j < i} K_j(t-y^N_j) + 
\sum \limits_{j \ge i} K_j(t-y^N_j)
\]
for $t \in (y^N_i, y^N_{i+1})$.

If $j < i,$ apply \eqref{lemma:simpleShifts:eq1} with $t_1 = 1, \ t_2 = t-z+1, \ h = z-1-y^N_j$. 
If $j > i$, use \eqref{lemma:simpleShifts:eq2} with $t_1 = z-y^N_j, \ t_2 = 1, \ h = t - z.$ In both cases, we obtain
\begin{equation} \label{lemma:MRS:eq1}
K_j(t-y^N_j) - K_j(z-y^N_j) \le K_j(t-z+1) - K_j(1), \quad j \neq i.
\end{equation}

Since $K_i$ satisfies \eqref{def:GM}, by \eqref{lemma:simpleShifts:eq2} with $t_1 = z-y^N_i, \ t_2 = t-y^N_i, \ h = y^N_i-z-1$, we get
\begin{equation} \label{lemma:MRS:eq3}
K_i(t-y^N_i) - K_i(z-y^N_i) \le K_i(t-z-1) - K_i(-1).
\end{equation}

By \eqref{lemma:MRS:eq1} and \eqref{lemma:MRS:eq3},
we obtain
\begin{align*}
F(\yy^N,t) & \le
J(t) + \sum \limits_{j \neq i} (K_j(z-y^N_j) + K_j(t-z+1)  - K_j(1)) \\
& +
K_i(z-y^N_i) + K_i(t-z-1) - K_i(-1) \\  
& = F(\yy^N,z) + J(t) - J(z) 
+ \sum \limits_{j \neq i} (K_j(t-z+1) - K_j(1)) \\
& + K_i(t-z-1) - K_i(-1). 
\end{align*}
Let $\ds C := - J(z) - \sum \limits_{j \neq i} K_j(1) - K_i(-1).$ By the above estimate, we have
\begin{equation} \label{lemma:MRS:eq5}
F(\yy^N,t) \le F(\yy^N,z) + J(t) + \sum \limits_{j \neq i} K_j(t-z+1)  
+ K_i(t-z-1) + C.
\end{equation}

Since $J$ is admissible, by Lemma \ref{lemma:admissible}, we have that 
\begin{equation}
\label{lemma:MRS:eq6} 
J(t) + \sum \limits_{j \neq i} K_j(t-z+1)  
+ K_i(t-z-1) \to -\infty, \quad t \to \infty.     
\end{equation}

Take arbitrary $L > 0$. Since $t \ge y^N_{i} \to \infty,$ for large $N$ we obtain that \eqref{lemma:MRS:eq6} is less than $-L-C$.
Hence, by \eqref{lemma:MRS:eq5}, for large $N$, we get that for $t \in (y^N_i, y^N_{i+1})$
\[
F(\yy^N,t) \le F(\yy^N,z) - L.
\]
Therefore, taking into account the choice of $z$, we have
\[
m_{i}(\yy^N) \le F(\yy^N,z) - L \le m_{i-1}(\yy^N) - L.
\]
Thus
\[
m_{i}(\yy^N) - m_{i-1}(\yy^N) \to -\infty, \quad N \to \infty.
\]
\end{proof}

\begin{Def}
A function $g: A \to B$ between two Hausdorff topological spaces is called \emph{proper} if for any compact set $Q \subset B$ we have that $g^{-1}(Q)$ is also a compact set \cite[p.~20]{Bredon}.
\end{Def}

\begin{Lem} \label{lemma:proper}
Assume that the kernel functions $K_j \colon (-\infty, 0) \cup (0, \infty) \to \RR, \ j = 1,\ldots,n,$ are singular and satisfy \eqref{def:GM}. Let a field function $J$ be admissible. Then
the difference function $\Dif: \Reg \to \RR^n$ is proper.
\end{Lem}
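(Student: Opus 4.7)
The plan is to show that for any compact $Q \subset \RR^n$ and any sequence $\{\yy^N\} \subset \Dif^{-1}(Q)$, some subsequence converges to a point still lying in $\Dif^{-1}(Q)$. Since $[-\infty,\infty]^n$ is sequentially compact, I would first extract a subsequence $\{\yy^{N_k}\}$ converging in the extended sense to some $\yy^* \in [-\infty,\infty]^n$ with $y^*_1 \le \ldots \le y^*_n$.

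The first step is to exclude any infinite coordinate in $\yy^*$. If $\yy^*$ had a component equal to $\pm\infty$, then $\{\yy^{N_k}\}$ would be an unbounded sequence in $\Reg$ convergent in the extended sense, and Lemma~\ref{lemma:MRS} would produce an index $i$ with $|m_i(\yy^{N_k}) - m_{i-1}(\yy^{N_k})| \to \infty$. Since the differences are coordinates of $\Dif(\yy^{N_k}) \in Q$, they stay bounded, contradicting the divergence. Hence $\yy^* \in \Sinf$.

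The more delicate step is to verify that $\yy^* \in \Reg$, that is, that $m_j(\yy^*)$ is finite for every $j$. By Lemma~\ref{lemma:localHom}(2), each $m_j$ is continuous in the extended sense, so $m_j(\yy^{N_k}) \to m_j(\yy^*)$ in $[-\infty,\infty]$. The boundedness of $\Dif(\yy^{N_k}) \in Q$ controls the successive differences $m_j(\yy^{N_k}) - m_{j-1}(\yy^{N_k})$, so by a telescoping argument either all the $m_j(\yy^{N_k})$ stay bounded or they all tend to $-\infty$ simultaneously. In the latter case, passing to the limit would give $m_j(\yy^*) = -\infty$ for every $j$, hence $F(\yy^*,t) = -\infty$ for all $t \in \RR$. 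But $J$ is finite at more than $n$ points while $\sum_j K_j(\cdot - y^*_j)$ is finite outside the $n$-point set $\{y^*_1,\ldots,y^*_n\}$, so one can select $t_0$ where $F(\yy^*, t_0)$ is finite, a contradiction. Therefore all $m_j(\yy^*)$ are finite and $\yy^* \in \Reg$.

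Finally, the continuity of $\Dif$ on $\Reg$ (inherited from Lemma~\ref{lemma:localHom}(1), or directly from the extended continuity of the $m_j$ now that all their limits at $\yy^*$ are known finite) together with the closedness of $Q$ gives $\Dif(\yy^*) = \lim_k \Dif(\yy^{N_k}) \in Q$, so $\yy^* \in \Dif^{-1}(Q)$. The main obstacle I anticipate is the clean exclusion of limits lying on the boundary of $\Reg$ where some $m_j$ would blow down to $-\infty$; the telescoping trick combined with admissibility of $J$ (used to exclude $F(\yy^*,\cdot) \equiv -\infty$) is what makes this work.
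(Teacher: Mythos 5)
Your argument is correct and follows essentially the same route as the paper's: the paper proves $\Dif^{-1}(Q)$ is closed and bounded (Heine--Borel) rather than phrasing it as sequential compactness directly, but it uses exactly the two ingredients you identify, namely Lemma~\ref{lemma:MRS} to exclude unbounded limit points and the extended continuity of the $m_j$ from Lemma~\ref{lemma:localHom}. For the step placing the limit in $R$, the paper first extracts a single index $i$ with $m_i(\yy^*)$ finite (using that the field is finite at more than $n$ points, so some $F(\yy^*,t_0)>-\infty$) and propagates finiteness via the bounded differences, which is the same telescoping dichotomy you describe, just run in the opposite direction.
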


\begin{proof}
Let $Q \subset \RR^n$ be a compact set. We need to show that $\Dif^{-1}(Q)$ is also a compact set, i.~e., $\Dif^{-1}(Q)$ is closed and bounded.

Let us prove that $\Dif^{-1}(Q)$ is closed. 
By Lemma \ref{lemma:localHom}, $\Dif$ is continuous. Hence, since $Q$ is closed, $\Dif^{-1}(Q)$ is relatively closed in $\Reg$, i.~e.,
\[
\Dif^{-1}(Q) = A \cap \Reg, \quad \text{where $A$ is closed in $\RR^n$}.
\]
Consider $\{\yy^N\} \subset \Dif^{-1}(Q)$ and let $\yy^N \to \yy, \ N \to \infty.$ 
We have that $\yy \in A,$ since $\{\yy^N\} \subset A$ and $A$ is closed in $\RR^n$. 
If we prove that $\yy \in \Reg$, it immediately follows that $\Dif^{-1}(Q)$ is closed. 

The field $J$ is finite at least at $n+1$ points, so 
there is $i \in \{0, \ldots, n\}$ such that $m_{i}(\yy) \in \RR.$ Moreover, since $\{\yy^N\} \subset \Dif^{-1}(Q),$ there exists $C > 0$ such that for all $N \in \NN$
\[
|m_j(\yy^N)-m_{j-1}(\yy^N)| \le C, \quad j=1,\ldots,n.
\]
By Lemma~\ref{lemma:localHom}, the functions $m_j$ are continuous. 
Therefore, the differences $m_j(\yy)-m_{j-1}(\yy)$ also satisfy $|m_j(\yy)-m_{j-1}(\yy)| \le C$ for all $j=1,\ldots,n$.
Taking into account the finiteness of $m_i(\yy)$, this implies that $\yy \in \Reg.$ 

It remains to show that $\Dif^{-1}(Q)$ is bounded. Suppose, contrary to our claim, that there exists an unbounded sequence $\{\yy^N\} \subset \Dif^{-1}(Q)$. Without loss of generality, let $\yy^N$ converges in the extended sense. By Lemma \ref{lemma:MRS}, 
there is $i \in \{1,\ldots,n\}$ such that
\[
|m_{i}(\yy^N)-m_{i-1}(\yy^N)| \to \infty, \quad N \to \infty.
\] 
Hence $Q \supset \Dif(\{\yy^N\})$ is not bounded. This contradicts the assumption that $Q$ is a compact set, and this finishes the proof.
\end{proof}

\section{Proof of the main result}
To prove Theorem \ref{theorem:main}, we need the following sufficient condition, due to C.~W.~Ho \cite{Ho}, that a local homeomorphism is a global one.
\begin{NotOurTheorem} \label{theorem:Ho}
Let $A, B$ be pathwise connected, Hausdorff
topological spaces with $B$ simply connected. Let $f: A \to B$ be a proper local homeomorphism. Then $f$ is a global homeomorphism between $A$ and $B$.
\end{NotOurTheorem}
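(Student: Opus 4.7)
The plan is the standard covering-space argument: from properness and local homeomorphism deduce that $f$ is a covering map, then use simple connectedness of $B$ together with path-connectedness of $A$ to force it to be a homeomorphism.

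The first task is to show that $f$ is surjective. Since $f$ is a local homeomorphism, $f(A)$ is open in $B$. Properness forces $f$ to be a closed map into the Hausdorff space $B$: given a closed $C \subset A$ and a point $y \in \overline{f(C)}$, one picks a net $f(x_\alpha) \to y$, observes that $\{f(x_\alpha)\}_\alpha \cup \{y\}$ is compact in $B$, pulls it back to a compact subset of $A$ by properness, extracts a convergent subnet $x_{\alpha'} \to x \in C$, and concludes $y = f(x) \in f(C)$. Hence $f(A)$ is clopen in the connected space $B$, so $f(A) = B$. Moreover, each fiber $f^{-1}(b)$ is compact by properness and discrete by local injectivity, and therefore finite.

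The technical heart of the argument is to construct evenly covered neighborhoods. Given $b \in B$ with $f^{-1}(b) = \{a_1,\ldots,a_k\}$, I would choose pairwise disjoint open neighborhoods $U_i \ni a_i$ on which $f$ restricts to a homeomorphism onto an open set $V_i \ni b$, set $V := \bigcap_i V_i$, and then invoke closedness of $f$ once more: the set $C := A \setminus \bigcup_i U_i$ is closed in $A$, so $f(C)$ is closed in $B$ and does not contain $b$; the set $W := V \setminus f(C)$ is then an open neighborhood of $b$ whose preimage under $f$ is the disjoint union of the open sets $U_i \cap f^{-1}(W)$, each mapped homeomorphically onto $W$. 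This identifies $f$ as a covering map. Standard covering theory then finishes the job: the number of sheets of $f$ equals the index $[\pi_1(B,b) : f_\ast \pi_1(A,a_1)]$, which is $1$ by simple connectedness of $B$, so every fiber is a singleton. Hence $f$ is a continuous bijection that is also a local homeomorphism, therefore an open map, and thus a homeomorphism.

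The most delicate point I anticipate is the closedness claim used in the preceding paragraphs: in complete topological generality, a proper continuous map need not be closed without some local compactness (or compactly generated) hypothesis on the target. In the intended application one has $B = \RR^n$, for which this is automatic; more generally, Ho's theorem is typically stated with auxiliary hypotheses that make the implication "proper $\Rightarrow$ closed" immediate, and I would include such a hypothesis (or else work locally through Euclidean charts pulled back by the local homeomorphism) in order to make the net argument above rigorous. Once the closed-map property is in hand, the rest of the plan is largely formal and proceeds through the covering-space machinery described above.
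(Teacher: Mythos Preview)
The paper does not prove this statement at all: it is quoted verbatim as a known result due to C.~W.~Ho and simply cited, with no argument given. There is therefore nothing to compare your proposal against in the paper itself.

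That said, your sketch is the standard covering-space proof and is essentially correct. You have also correctly flagged the one genuine subtlety, namely that ``proper $\Rightarrow$ closed'' is not automatic for arbitrary Hausdorff targets and typically requires $B$ to be locally compact (or compactly generated). Ho's original note does impose such auxiliary hypotheses, and in the only application made in this paper one has $B=\RR^n$, so the issue evaporates. With that caveat handled, the remainder of your argument (open image, finite fibers, construction of evenly covered neighborhoods via closedness, and the sheet-counting via $\pi_1$) is routine and sound.
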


\begin{proof}[Proof of Theorem \ref{theorem:main}]
By Lemma \ref{lemma:connectedness}, the regular set $\Reg$ is pathwise connected.
The difference function
\[
\Dif: \Reg \to \RR^n, \ \yy \mapsto (m_1(\yy)-m_0(\yy), m_2(\yy)-m_1(\yy),\ldots,m_n(\yy)-m_{n-1}(\yy))
\]
is a local homeomorphism by Lemma \ref{lemma:localHom} and it is proper by Lemma \ref{lemma:proper}. Therefore, by Theorem \ref{theorem:Ho}, we obtain that $\Dif$ is a global homeomorphism.
\end{proof}

\section{Homeomorphism theorem for sums of translates \\ on the semiaxis}
Let $J^+$ be an $n$-field on $[0, \infty)$, $K_j: (-\infty,0) \cup (0,\infty) \to \RR, \ j=1,\ldots,n,$ be kernels. Admissibility of the field $J^+$ for $K_1,\ldots,K_n$ can be defined as follows:
\[
\lim_{t \to \infty} \left( J^+(t) + \sum \limits_{j=1}^n K_j(t) \right) = -\infty.
\]
Denote
\[
\Simplex{[0, \infty)} := \{(y_1, \ldots, y_n) \in \RR^n: \ 0 \le y_1 \le \ldots \le y_n < \infty \}.
\]
Consider sums of translates on $[0, \infty)$
\[
F^+(\yy,t) := J^+(t) + \sum \limits_{j = 1}^n K_j(t-y_j), \quad \yy \in \Simplex{[0, \infty)}, \quad t \in [0, \infty),
\]
and local maxima
\begin{align*}
m^+_0(\yy) & := \sup \limits_{t \in [0, y_1]} F^+(\yy, t), \quad
m^+_n(\yy) := \sup \limits_{t \in [y_n, \infty)} F^+(\yy, t),\\
m^+_j(\yy) & := \sup \limits_{t \in [y_j, y_{j+1}]} F^+(\yy, t), \quad j = 1,\ldots,n-1.
\end{align*}
Let $R^+$ be the regular set in this setting, i.e.,
\[
R^+ := \{\yy \in \Simplex{[0, \infty)}: \ m^+_j(\yy) \neq -\infty \text{ for } j = 0,\ldots,n \}.
\]

\begin{Cor}
Suppose that the singular, strictly concave kernel functions $K_j: (-\infty, 0) \cup (0, \infty) \to \RR, \ j = 1,\ldots,n,$ satisfy \eqref{def:GM} and $J^+: [0,\infty) \to \uRR$ is an admissible $n$-field function for $K_1,\ldots,K_n$.
Then the difference function
\[
\Dif^+: \Reg^+ \to \RR^n,\quad \yy \mapsto (m^+_1(\yy)-m^+_0(\yy),m^+_2(\yy)-m^+_1(\yy),\ldots,m^+_n(\yy)-m^+_{n-1}(\yy))
\]
is a homeomorphism between $\Reg^+$ and $\RR^n$. Moreover, $\Dif^+$ is locally bi-Lipschitz.
\end{Cor}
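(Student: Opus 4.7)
The plan is to reduce the semiaxis problem directly to Theorem~\ref{theorem:main} by extending the field $J^+$ to all of $\RR$ via
\[
J(t) := \begin{cases} J^+(t), & t \ge 0, \\ -\infty, & t < 0. \end{cases}
\]
First I would verify that $J$ is an admissible $n$-field on $\RR$ for $K_1,\ldots,K_n$. Boundedness above is inherited from $J^+$, and $J$ is finite at exactly the points where $J^+$ is, hence at more than $n$ points of $\RR$. Admissibility as $t \to -\infty$ is automatic since $J(t) = -\infty$ there, and as $t \to \infty$ it is precisely the admissibility of $J^+$.

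Next I would match the two problems pointwise. For any $\yy \in \Sinf$ with $y_1 \ge 0$, the definition of $J$ gives $F(\yy,t) = F^+(\yy,t)$ for $t \in [0,\infty)$ and $F(\yy,t) = -\infty$ for $t \in (-\infty,0)$. Consequently the local maxima coincide: $m_j(\yy) = m_j^+(\yy)$ for all $j = 0,\ldots,n$. Moreover, any $\yy \in \Reg$ must satisfy $y_1 \ge 0$, since $m_0(\yy) \neq -\infty$ forces some $t \in (-\infty, y_1]$ with $J(t) \neq -\infty$, and $J \equiv -\infty$ on $(-\infty,0)$. Together these observations yield $\Reg = \Reg^+$ as subsets of $\RR^n$ and $\Dif \equiv \Dif^+$ as functions on this common domain.

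With these identifications, Theorem~\ref{theorem:main} applied to $J$ and $K_1,\ldots,K_n$ on $\RR$ immediately gives that $\Dif^+$ is a locally bi-Lipschitz homeomorphism between $\Reg^+$ and $\RR^n$. There is no real obstacle in this argument: the only verification of substance is that extension of $J^+$ by $-\infty$ on the left preserves the hypotheses of the main theorem, which it does trivially.
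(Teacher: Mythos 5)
Your proof is correct and takes the same approach as the paper: extend $J^+$ to $\RR$ by setting it equal to $-\infty$ on $(-\infty,0)$, observe that the resulting field is admissible, identify $R = R^+$ and $m_j = m_j^+$, and then invoke Theorem~\ref{theorem:main}. You spell out a bit more detail (notably the verification that $\Reg \subseteq \Simplex{[0,\infty)}$ because $m_0(\yy)=-\infty$ whenever $y_1 < 0$), but the argument is structurally identical.
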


\begin{proof}
Consider
\[
F(\yy,t)= J(t) + \sum \limits_{j = 1}^n K_j(t-y_j), \quad \yy \in \Simplex{[0,\infty)}, \quad t \in \RR,
\]
where 
\[
J(t) = 
\begin{cases}
-\infty, & t < 0,\\
J^+(t), & t \ge 0.
\end{cases}
\]
Note that $K_j, \ j=1,\ldots,n,$ and $J$ satisfy the conditions of Theorem \ref{theorem:main}. Therefore, the difference function $\Dif: \Reg \to \RR^n$ for $F$ is a homeomorphism, and it is locally bi-Lipschitz. Moreover, the definition of $J$ implies that
\[
R^+ = R, 
\quad \text{and} \quad 
m_j^+(\yy) = m_j(\yy), \ \yy \in R^+.
\]
Hence $\Dif^+ \equiv \Dif,$ and $\Dif^+$ is a homeomorphism, and it is locally bi-Lipschitz, too.
\end{proof}

\section{Homeomorphism theorem for weighted generalized \\ polynomials on the real axis}
Let $r_1, \ldots, r_n > 0$ be arbitrary. Denote $\mathbf{r} := (r_1,\ldots,r_n)$. Consider the following set of monic generalized nonnegative polynomials \cite[p.~392]{BorweinErdelyi} of degree $r~:=~\sum \limits_{j=1}^n r_j$  
\[
\mathcal{P}_\rr(\RR) := \left\{p(\yy,t) = \prod \limits_{j=1}^n |t-y_j|^{r_j}: \ -\infty < y_1 < \ldots < y_n < \infty \right\}.
\]
Let $\RR^n_+$ denote the subset of vectors from $\RR^n$ with positive coordinates.

\begin{Cor}
Let $w: \RR \to [0, \infty)$ be a bounded above function assuming non-zero values at more than $n$ points. Assume that
\[
\lim \limits_{|t| \to \infty} w(t) \cdot t^{r} = 0.
\]
Let 
\[
S^{\infty} := \{(y_1, \ldots, y_n) \in \RR^n: \ -\infty < y_1 < \ldots < y_n < \infty\}, \quad 
\]
and 
\[
X := \{t \in \RR: \ w(t) = 0\}.
\]
For convenience, let $y_0 := -\infty, \ y_{n+1} := \infty.$
Denote 
\[
\Reg := \{\yy \in S^{\infty}: \ (y_j, y_{j+1}) \not \subseteq X 
\quad \text{for} \quad 
j=0,\ldots,n\}.
\]
Then the mapping
\[
\Reg \ni \yy \mapsto
\left(
\dfrac{\sup \limits_{t \in (y_1,y_{2})} w(t) p(\yy,t)}
{\sup \limits_{t \in (-\infty,y_1)} w(t) p(\yy,t)},
\ldots,
\dfrac{\sup \limits_{t \in (y_n,\infty)} w(t) p(\yy,t)}
{\sup \limits_{t \in (y_{n-1},y_n)} w(t) p(\yy,t)}
\right) \in \RR^n_+
\]
is a homeomorphism between $R$ and $\RR^n_+$. Moreover, it is locally bi-Lipschitz.
\end{Cor}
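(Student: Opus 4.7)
The plan is to reduce the corollary to Theorem~\ref{theorem:main} by passing to logarithms. I would set $J(t) := \log w(t)$ (with the convention $\log 0 := -\infty$) and $K_j(t) := r_j \log|t|$ for $j=1,\ldots,n$. Then
\[
F(\yy,t) := J(t) + \sum_{j=1}^n K_j(t-y_j) = \log\bigl(w(t)\,p(\yy,t)\bigr).
\]

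Next I would verify the hypotheses of Theorem~\ref{theorem:main}. The kernel $r_j \log|t|$ is singular at $0$, strictly concave on $(-\infty,0)$ and on $(0,\infty)$ (second derivative $-r_j/t^2<0$), and satisfies \eqref{def:GM} since $\lim_{t \to \pm\infty} r_j/t = 0$. The field $J$ is bounded above because $w$ is, and finite at more than $n$ points because $w$ is nonzero at more than $n$ points. Admissibility reduces to
\[
J(t) + \sum_{j=1}^n K_j(t) = \log\bigl(w(t)\,|t|^{r}\bigr) \to -\infty, \quad |t|\to\infty,
\]
which is exactly the hypothesis $w(t)\,t^r \to 0$. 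I would then match the regularity sets: because the kernels are singular, $F(\yy,t) = -\infty$ whenever $t \in \{y_1,\ldots,y_n\}$, so any $\yy \in \Sinf$ with $y_i = y_{i+1}$ has $m_i(\yy)=-\infty$ and is automatically excluded from the regularity set of Theorem~\ref{theorem:main}. For $\yy$ with strict inequalities, $m_j(\yy) \neq -\infty$ if and only if there is $t \in (y_j, y_{j+1})$ with $w(t)>0$, i.e., $(y_j, y_{j+1}) \not\subseteq X$. Hence the regularity set $\Reg$ of the corollary coincides with that of Theorem~\ref{theorem:main}.

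Finally, I would observe that the ratio map in the corollary equals $E \circ \Dif$, where $\Dif$ is the difference function of Theorem~\ref{theorem:main} and $E \colon \RR^n \to \RR^n_+$ is the coordinatewise exponential, because
\[
\frac{\sup_{t \in (y_j, y_{j+1})} w(t)\,p(\yy,t)}{\sup_{t \in (y_{j-1}, y_j)} w(t)\,p(\yy,t)} = e^{m_j(\yy) - m_{j-1}(\yy)}.
\]
Since $E$ is a $C^\infty$ diffeomorphism with everywhere nonsingular derivative, it is a locally bi-Lipschitz homeomorphism between $\RR^n$ and $\RR^n_+$. Theorem~\ref{theorem:main} gives the same properties for $\Dif$, and these properties are preserved under composition. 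No substantive obstacle is expected; the mildest subtlety is that the sups in the theorem are taken over closed intervals $[y_j, y_{j+1}]$ whose endpoints give $F = -\infty$, but this leaves the suprema unchanged and aligns them with the open-interval suprema appearing in the corollary.
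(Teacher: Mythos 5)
Your proposal is correct and follows essentially the same route as the paper: set $K_j := r_j\log|\cdot|$, $J := \log w$, invoke Theorem~\ref{theorem:main}, and post-compose with the coordinatewise exponential $E$. You are somewhat more explicit than the paper in verifying the hypotheses of Theorem~\ref{theorem:main} and in matching the two descriptions of the regularity set, but the substance is the same.
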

\begin{proof}
Let $K_j := r_j \log|\cdot|, \ j=1,\ldots,n, \ J := \log w.$ Obviously, these kernels and this field satisfy the conditions of Theorem \ref{theorem:main}. Therefore, the difference function
\[
\Dif: R \to \RR^n, \quad \yy \mapsto \left(
\sup \limits_{t \in (y_j,y_{j+1})} \log(w(t) p(\yy,t)) - \sup \limits_{t \in (y_{j-1},y_j)} \log(w(t) p(\yy,t))
\right)_{j=1}^n
\]
is a homeomorphism between $R$ and $\RR^n$, and it is locally bi-Lipschitz.
Since the logarithm strictly increases on $(0,\infty),$
\[
\sup \limits_{t \in (y_j,y_{j+1})} \log (w(t) p(\yy,t)) = \log \left( \sup \limits_{t \in (y_j,y_{j+1})} w(t) p(\yy,t) \right), \quad \yy \in \Reg.
\]
Let us write the difference function $D$ as
\[
\yy \mapsto \left(
\log \dfrac{\sup \limits_{t \in (y_j,y_{j+1})} w(t) p(\yy,t)}{\sup \limits_{t \in (y_{j-1},y_j)} w(t) p(\yy,t)}
\right)_{j=1}^n.
\]
Note that the function
\[
E: \ \RR^n \to \RR^n_+ \quad \xx \mapsto (\exp(x_1),\ldots,\exp(x_n))
\]
is a homeomorphism, and it is locally Lipschitz. It remains to see that the mapping in our statement is equal to $E \circ D.$
\end{proof}

\section{Interpolation by products of log-concave functions \\ with weight}
The homeomorphism theorem allows one to obtain results on interpolation by weighted products of log-concave functions. This is discussed in detail in \cite[Sect.~9]{FNR} for the homeomorphism theorem on the segment. The authors proved general results \cite[Th.~9.2,~9.6]{FNR} on the existence and uniqueness of an interpolation function. They also studied applications of these results to trigonometric interpolation \cite[Th.~9.7]{FNR}. Moreover, in \cite[Subsect.~9.3,~9.4]{FNR} the authors investigated the applications to moving node Hermite–Fej\'{e}r interpolation. 
Below we prove some analogues of the most general of these results on the real axis.

\subsection{Abstract log-concave interpolation on the real axis}
\begin{Teo} \label{theorem:interpolation}
Let $L_1,\ldots,L_n: \RR \to [0,\infty)$ be log-concave
functions vanishing at $0$ and satisfying 
\[
\lim \limits_{t \to -\infty} (\log L_j(t))' \le \lim \limits_{t \to \infty} (\log L_j(t))', \quad j=1,\ldots,n.
\]
Let $w: \RR \to [0, \infty)$ be a bounded above function assuming non-zero values at more than $n$ points. Assume that
\[
\lim \limits_{|t| \to \infty} w(t) \prod \limits_{j=1}^n L_j(t) = 0.
\]
For any $-\infty < x_0 < \ldots < x_n < \infty$ with $w(x_j) > 0$ and $\alpha_0, \ldots, \alpha_n > 0$ there are a unique
$C > 0$ and points $y_1 < y_2 < \ldots < y_n$ with $x_j < y_{j+1} < x_{j+1}$ for
each $j \in \{0,\ldots,n-1\}$ such that for the function
\[
G(t) := C w(t) \prod \limits_{j=1}^n L_j(t-y_j)
\]
we have
\[
G(x_j) = \alpha_j, \quad j = 0,\ldots,n.
\]
\end{Teo}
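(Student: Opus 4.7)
The plan is to pass to logarithms and reduce the problem directly to Theorem \ref{theorem:main}, using an auxiliary field concentrated only on the prescribed interpolation nodes. Set $K_j := \log L_j$ for $j = 1,\ldots,n$; by the hypotheses these are singular (because $L_j(0) = 0$) strictly concave kernel functions on $(-\infty,0) \cup (0,\infty)$ satisfying \eqref{def:GM}. As the field I would take not $\log w$ itself but the truncation
\[
\tilde J(t) := \begin{cases} \log w(x_i), & t = x_i, \ i \in \{0,\ldots,n\}, \\ -\infty, & \text{otherwise}, \end{cases}
\]
which is bounded above, finite at the $n+1$ points $x_0, \ldots, x_n$, and trivially admissible for $K_1,\ldots,K_n$ because $\tilde J \equiv -\infty$ outside a finite set. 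Hence Theorem \ref{theorem:main} applies to $F(\yy,t) := \tilde J(t) + \sum_{j=1}^n K_j(t - y_j)$, giving a homeomorphism $\Dif \colon \Reg \to \RR^n$.

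Next I would identify the regularity set explicitly as $\Reg = \prod_{j=1}^n (x_{j-1}, x_j)$. Since $F(\yy,\cdot)$ is finite exactly at the points $x_i$ distinct from all $y_k$, the requirement $m_j(\yy) \neq -\infty$ for every $j = 0,\ldots,n$ means that each of the $n+1$ open intervals $(y_k, y_{k+1})$ (with $y_0 := -\infty$, $y_{n+1} := \infty$) must contain some $x_i$. The orderings $y_1 < \ldots < y_n$ (forced, since otherwise one of these intervals is empty) and $x_0 < \ldots < x_n$ together with pigeon-hole then put exactly one $x_j$ in each $(y_j, y_{j+1})$, giving $x_{j-1} < y_j < x_j$. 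Conversely, any $\yy$ in the product set clearly lies in $\Reg$, and on it $m_j(\yy) = F(\yy, x_j) = \log w(x_j) + \sum_i K_i(x_j - y_i)$.

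With this identification in place, the argument closes as follows. Given $\alpha_0,\ldots,\alpha_n > 0$, set $\beta_j := \log(\alpha_j/\alpha_{j-1})$ for $j = 1,\ldots,n$. The homeomorphism furnishes a unique $\yy^* \in \prod_{j=1}^n (x_{j-1}, x_j)$ with $\Dif(\yy^*) = (\beta_1,\ldots,\beta_n)$, which, unravelled, asserts that the ratio $\alpha_j / \bigl(w(x_j) \prod_i L_i(x_j - y_i^*)\bigr)$ is independent of $j$; denote its common value by $C > 0$. Then $G(x_j) = C w(x_j) \prod_i L_i(x_j - y_i^*) = \alpha_j$ for every $j$, and both $\yy^*$ and $C$ are uniquely determined, the former by the homeomorphism and the latter by normalisation at a single node.

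The delicate point I expect is the explicit identification $\Reg = \prod_{j=1}^n (x_{j-1}, x_j)$: one must check carefully that the pigeon-hole really yields a bijection matching indices, and that the boundary coincidences $y_j = x_{j-1}$ or $y_j = x_j$ are excluded because they destroy regularity through the singularity of $K_j$ at $0$. Once this is settled, the remainder is a routine translation between the multiplicative picture of the statement and the additive picture of Theorem \ref{theorem:main}.
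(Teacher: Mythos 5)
Your proposal is correct and follows essentially the same approach as the paper's own proof: pass to logarithms, use the truncated field $\tilde J$ supported only on the prescribed nodes $\{x_0,\ldots,x_n\}$ (which is trivially admissible), apply Theorem~\ref{theorem:main}, identify the regularity set as $\{\yy : x_{j-1} < y_j < x_j\}$ by pigeon-hole plus singularity of the kernels at $0$, and then translate the resulting unique solution of $\Dif(\yy^*) = (\log(\alpha_j/\alpha_{j-1}))_j$ back to the multiplicative picture, fixing $C$ by normalisation at one node. The only cosmetic difference is that you spell out the pigeon-hole argument for the identification of $\Reg$ explicitly, whereas the paper states it without proof.
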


\begin{proof}
Let 
$K_j(t)  := \log L_j(t)$, $j=1,\ldots,n$,
\begin{equation}\label{9J}
J(t)  :=
\begin{cases}
\log w(t), \quad  t \in \{x_0,\ldots,x_n\}, \\
-\infty, \quad t \in \RR \setminus \{x_0,\ldots,x_n\}.
\end{cases}    
\end{equation}
It is easy to see that $K_j$ and $J$ satisfy the conditions of Theorem \ref{theorem:main}.

We claim that the regularity set has the following form
\begin{gather} \label{theorem:interpolation:Reg}
\Reg = \{\yy \in \Sinf: \ x_j < y_{j+1} < x_{j+1} \text{ for } j = 0,\ldots,n-1\}.
\end{gather}
Indeed, let $y_0 := -\infty$, $y_n := \infty$, and $j = 0,\ldots,n$. By the singularity of the kernels, $m_j(\yy)>-\infty$ if and only if $J(t_j) > -\infty$ for some point $t_j \in (y_j,y_{j+1}).$ Thus, there are $n+1$ intervals $(y_j,y_{j+1})$, and, by \eqref{9J}, there are exactly $n+1$ points 
$x_j$, where $J$ is finite. Consequently, $\Reg$ is of the form \eqref{theorem:interpolation:Reg}.

Take arbitrary $\alpha_0, \ldots, \alpha_n > 0.$ By Theorem \ref{theorem:main}, there is a unique $\yy \in \Reg$
such that
\begin{gather} \label{theorem:interpolation:eq1}
m_j(\yy)-m_{j-1}(\yy)=
\log(\alpha_{j}/\alpha_{j-1}), \quad j=1,\ldots,n.    
\end{gather}
We have
\begin{gather}
\label{theorem:interpolation:eq2}
m_j(\yy) = F(\yy,x_j) = \log w(x_j) + \sum \limits_{k=1}^n \log L_k(x_j-y_k), \quad j=0,\ldots,n.    
\end{gather}
Therefore, using \eqref{theorem:interpolation:eq1}, after exponentiating we get
\[
\dfrac{\exp m_j(\yy)}{\exp m_{j-1}(\yy)}
=
\dfrac{w(x_j) \prod \limits_{k=1}^n L_k(x_j-y_k)}
{w(x_{j-1}) \prod \limits_{k=1}^n L_k(x_{j-1}-y_k)} = \dfrac{\alpha_j}{\alpha_{j-1}}, \quad j=1,\ldots,n.  
\]
Hence
\[
\dfrac{w(x_j) \prod \limits_{k=1}^n L_k(x_j-y_k)}
{w(x_0) \prod \limits_{k=1}^n L_k(x_0-y_k)} = \dfrac{\alpha_j}{\alpha_0} , \quad j=1,\ldots,n,
\]
and for the function
\[
G(t):= Cw(t) \prod \limits_{k=1}^n L_k(t-y_k) 
\quad \text{with} \quad
C:=\dfrac{\alpha_0}{w(x_0) \prod \limits_{k=1}^n L_k(x_0-y_k)},
\]
we obtain
\[
G(x_j) = \alpha_j \quad j=0,\ldots,n.
\]

Let us show the uniqueness. Assume that for some $\widetilde{C}>0$ and $\widetilde{\yy} = (\widetilde{y}_1,\ldots,\widetilde{y}_n) \in \Reg$ we have
\[
\widetilde{C} w(x_j) \prod \limits_{k=1}^n L_k(x_j-\widetilde{y}_k) = \alpha_j \quad j=0,\ldots,n.
\]
Using \eqref{theorem:interpolation:eq2}, we have
\[
\log \widetilde{C} + m_j(\widetilde{\yy}) = 
\log \widetilde{C} +  \log w(x_j) + \sum \limits_{k=1}^n \log L_k(x_j-y_k) = \log \alpha_j \quad j=0,\ldots,n.
\]
Hence
\[
m_j(\widetilde{\yy})-m_{j-1}(\widetilde{\yy})=
\log(\alpha_{j}/\alpha_{j-1}), \quad j=1,\ldots,n.
\]
By Theorem \ref{theorem:main}, we conclude that $\widetilde{\yy}=\yy,$ therefore, $\widetilde{C} = C,$ too.
\end{proof}

\subsection{Moving node Hermite–Fej\'{e}r interpolation}
\begin{Teo}
Let $L_1,\ldots,L_n: \RR \to [0,\infty)$ be strictly log-concave
functions vanishing at $0$ and satisfying 
\[
\lim \limits_{t \to -\infty} (\log L_j(t))' \le \lim \limits_{t \to \infty} (\log L_j(t))', \quad j=1,\ldots,n.
\]
Let $w: \RR \to [0, \infty)$ be an upper semicontinuous function assuming non-zero values at more than $n$ points. Assume that
\[
\lim \limits_{|t| \to \infty} w(t) \prod \limits_{j=1}^n L_j(t) = 0.
\]
For convenience, let $y_0 := -\infty$ and $y_{n+1} := \infty$. Then, for any $\alpha_0, \ldots, \alpha_n > 0$ there are a unique $C > 0$ and points $y_1 < \ldots < y_n$ such that for the function
\[
G(t) := C w(t) \prod \limits_{j=1}^n L_j(t-y_j)
\]
there are $z_0, \ldots , z_n$ with $z_0 < y_1 < z_1 < y_2 < \ldots < z_{n-1} < y_n < z_n$
and
\[
G(z_j) = \alpha_j, \quad j = 0,\ldots,n,
\]
where $z_j$ is the maximum point of $G$ between $y_j$ and $y_{j+1}$ for each $j=0,\ldots,n.$
\end{Teo}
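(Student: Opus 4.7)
I would mirror the proof of Theorem~\ref{theorem:interpolation} by passing to logarithms and invoking Theorem~\ref{theorem:main}. Set $K_j(t) := \log L_j(t)$ and $J(t) := \log w(t)$ (with $\log 0 := -\infty$). Strict log-concavity of each $L_j$ together with $L_j(0) = 0$ makes $K_j$ a singular, strictly concave kernel on $(-\infty,0) \cup (0,\infty)$, and the assumed inequality on $(\log L_j)'(\pm\infty)$ is precisely \eqref{def:GM}. Upper semicontinuity of $w$ makes $J\colon \RR \to \uRR$ upper semicontinuous and bounded above, and the hypothesis that $w$ is nonzero at more than $n$ points makes $J$ an $n$-field. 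Finally, the decay assumption $w(t) \prod_{j=1}^n L_j(t) \to 0$ as $|t|\to\infty$ reads $J(t) + \sum_{j=1}^n K_j(t) \to -\infty$, which is exactly admissibility of $J$ for $K_1,\ldots,K_n$.

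\textbf{Construction and telescoping.} Theorem~\ref{theorem:main} then gives a homeomorphism $\Dif\colon \Reg \to \RR^n$. For prescribed $\alpha_0,\ldots,\alpha_n > 0$ there is therefore a unique $\yy = (y_1,\ldots,y_n) \in \Reg$ with
\[
m_j(\yy) - m_{j-1}(\yy) = \log(\alpha_j/\alpha_{j-1}), \quad j = 1,\ldots,n.
\]
Since $\yy \in \Reg$, $m_0(\yy) \in \RR$; I set $C := \alpha_0 \exp(-m_0(\yy)) > 0$ and
\[
G(t) := C\, w(t) \prod_{j=1}^n L_j(t - y_j).
\]
A telescoping computation, identical in spirit to the one in the proof of Theorem~\ref{theorem:interpolation}, yields $m_j(\yy) + \log C = \log \alpha_j$, equivalently $\sup_{t \in [y_j, y_{j+1}]} G(t) = \alpha_j$ for each $j = 0,\ldots,n$ (with the conventions $y_0 := -\infty$, $y_{n+1} := +\infty$).

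\textbf{Maxima, interlacing, and uniqueness.} Because $L_k(0) = 0$, one has $G(y_k) = 0$ for $k = 1,\ldots,n$, and admissibility forces $G(t) \to 0$ as $|t| \to \infty$. On each closed interval $[y_j, y_{j+1}]$ the nonnegative function $G$ is upper semicontinuous and vanishes at the (possibly infinite) endpoints; hence the strictly positive supremum $\alpha_j$ is attained at some interior point $z_j$, which simultaneously gives $G(z_j) = \alpha_j$ and the strict interlacing $z_0 < y_1 < z_1 < \cdots < y_n < z_n$. Uniqueness of the pair $(C,\yy)$ follows exactly as in Theorem~\ref{theorem:interpolation}: any alternative admissible pair produces the same differences $\log(\alpha_j/\alpha_{j-1})$, and Theorem~\ref{theorem:main} then forces the same $\yy$, after which $C$ is pinned down by the normalization $G(z_0) = \alpha_0$. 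I expect the only genuinely delicate point to be attainment of the supremum on the noncompact end-intervals $(-\infty, y_1]$ and $[y_n, +\infty)$: upper semicontinuity alone is not enough, and one must use the global decay $G(t) \to 0$ at $\pm\infty$ to reduce to a compact-like situation; this decay depends crucially on admissibility of $J$, so the argument is clean once admissibility is in hand.
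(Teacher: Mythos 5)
Your proposal follows the paper's own proof essentially step for step: set $K_j := \log L_j$ and $J := \log w$, verify the hypotheses of Theorem~\ref{theorem:main}, obtain the unique $\yy$ with the prescribed differences $m_j(\yy)-m_{j-1}(\yy)=\log(\alpha_j/\alpha_{j-1})$, identify the interior maximizers $z_0<y_1<\cdots<y_n<z_n$, and then run the same exponentiation and uniqueness argument as in Theorem~\ref{theorem:interpolation} with the $x_j$ replaced by the $z_j$. The only difference is presentational: you make explicit the attainment of the suprema on the two unbounded end-intervals (via decay at $\pm\infty$ plus upper semicontinuity), which the paper compresses into the remark that the $z_i$ exist because the $K_j$ are strictly concave and $w$ is upper semicontinuous.
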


\begin{proof}
Let 
\[
K_j(t) := \log L_j(t), 
\quad j=1,\ldots,n, \quad J(t) := \log w(t).
\]
By Theorem \ref{theorem:main}, there is a unique $\yy \in \Reg$ such that 
\[
m_j(\yy)-m_{j-1}(\yy)=
\log(\alpha_{j}/\alpha_{j-1}), \quad j=1,\ldots,n.    
\]
Note that since $K_j$ are strictly concave and $w$ is upper semicontinuous, there are $z_0, \ldots , z_n$ with $z_0 < y_1 < z_1 < y_2 < \ldots < z_{n-1} < y_n < z_n$ such that
\[
F(\yy, z_i) = m_i(\yy),
\quad i=0,\ldots,n.
\]
The further proof is similar to the proof of Theorem \ref{theorem:interpolation} with $x_j$ replaced by $z_j, \\ j=0,\ldots,n$.
\end{proof}

\section*{Acknowledgements}
The author is grateful to Prof. Sz.~Gy.~R\'{e}v\'{e}sz for proposing the problem, useful references, discussions and detailed feedback during the preparation of the article.

The author is thankful to P.~Yu.~Glazyrina for continuous support, discussions on the proofs, and helpful suggestions regarding the structure of the article.

\bigskip
\bigskip
\bigskip
\noindent\parindent0pt
\hspace*{5mm}
\begin{minipage}{\textwidth}
\noindent
\hspace*{-5mm}Tatiana M. Nikiforova\\
Krasovskii Institute of Mathematics and Mechanics, \\
Ural Branch of the Russian Academy of Sciences\\
620990 Ekaterinburg, Russia,\\
Ural Federal University\\
620002 Ekaterinburg, Russia
\end{minipage}

\end{document}